
\documentclass[]{interact}

\usepackage{epstopdf}
\usepackage{subfigure}
\usepackage{amssymb}  
\usepackage{psfrag}
\usepackage{mathrsfs}
\usepackage{color}
\usepackage{pgfplots}
\pgfplotsset{compat=newest} 
\newlength\figureheight 
\newlength\figurewidth 
\usepackage{commath} 
\usepackage{tikz}

\usepackage[natbibapa,nodoi]{apacite}
\setlength\bibhang{12pt}



\theoremstyle{plain}
\newtheorem{theorem}{Theorem}[section]

\theoremstyle{definition}

\theoremstyle{remark}
\newtheorem{rem}{Remark}

\renewcommand{\d}{\mathrm{d}}

\begin{document}


\title{Output feedback control of general linear heterodirectional hyperbolic PDE-ODE systems with spatially-varying coefficients}

\author{
	\name{J.~ Deutscher\textsuperscript{a}\thanks{\emph{Email addresses:} joachim.deutscher@fau.de, nicole.gehring@jku.at and richard.kern@tum.de.}, N.~Gehring\textsuperscript{b} and R.~Kern\textsuperscript{c} }
	\affil{\textsuperscript{a}Lehrstuhl f\"ur Regelungstechnik, Universit\"at Erlangen-N\"urnberg, Cauerstra{\ss}e 7, D--91058 Erlangen, Germany}
	\affil{\textsuperscript{b}Institut f\"ur Regelungstechnik und Prozessautomatisierung, Universit\"at Linz, Altenberger Stra{\ss}e 69, 4040 Linz, Austria}
	\affil{\textsuperscript{c}Lehrstuhl f\"ur Regelungstechnik, Technische Universit\"at M\"unchen,
		Boltzmannstra{\ss}e 15, D-85748 Garching bei M\"unchen, Germany}
}

\maketitle

\begin{abstract}
This paper presents a backstepping solution for the output feedback control of general linear heterodirectional hyperbolic PDE-ODE systems with spatially-varying coefficients. Thereby, the coupling in the PDE is in-domain and at the uncontrolled boundary, whereby the ODE is coupled with the latter boundary. For the state feedback design a two-step backstepping approach is developed, that yields the conventional kernel equations and additional decoupling equations of simple form. The latter can be traced back to simple Volterra integral equations of the second kind, which are directly solvable with a successive approximation. In order to implement the state feedback controller, the design of observers for the ODE-PDE systems in question is considered, whereby anticollocated measurements are assumed. Simple conditions for the existence of the resulting observer-based compensator are formulated, that can be evaluated in terms of the plant transfer behaviour. The resulting systematic compensator design is illustrated for a $4 \times 4$ heterodirectional hyperbolic system coupled with a third order ODE modelling a dynamic boundary condition.
\end{abstract}

\begin{keywords}
Distributed-parameter systems, hyperbolic systems, backstepping, boundary control, coupled PDE-ODE systems.
\end{keywords}

\section{Introduction}
In the last decade, the \emph{backstepping approach} emerged as a very powerful tool for stabilizing boundary controlled distributed-parameter systems (DPS) (see, e.\:g., \cite{Kr08} for an overview). The main idea of this method is to introduce invertible Volterra-type integral transformations so that the controller design is facilitated. It was soon recognized that the backstepping approach can also provide systematic solutions for the control of PDE-ODE systems. In the pioneering work \cite{Kr08a} the backstepping method was applied to the stabilization of a \emph{PDE-ODE cascade}, in which the PDE is a first-order hyperbolic system modelling an actuation delay. Afterwards, this result was extended to cascades with diffusion and string PDEs in \cite{Kr09a,Kr09b} enlarging the class of infinite-dimensional actuators.

Subsequently, the backstepping control of \emph{coupled PDE-ODE systems} attracted the interest of many researchers. Thereby, a bidirectional coupling appears between the PDE and the ODE hindering the control design. A first solution for a heat equation coupled with an ODE can be found in \cite{Ta11}. Therein, the coupling in the PDE appears at the unactuated boundary. The same type of coupling was considered in \cite{Sag13} for a wave equation. Another important problem is the stabilization of coupled PDE-ODE systems with an in-domain coupling in the PDE. Solutions for heat equations can be found in \cite{Ta11a}, whereas \cite{Ta12} deals with a wave equation. 

In the last years the backstepping method was extended to a large class of \emph{linear heterodirectional hyperbolic systems}, that consist of an arbitrary number of transport equations convecting in different directions. More precisely, in \cite{Hu15a} the constant coefficient case for this system class is considered. Furthermore, \cite{Hu15b} deals with systems having spatially-varying coefficients, that arise from the linearization of quasilinear hyperbolic systems. These results allow to stabilize more general classes of hyperbolic PDE-ODE systems by making use of the backstepping method. Besides the theoretical appeal of this problem there is also a strong interest originating from applications. Examples are coupled string networks (see, e.\:g., \cite[Ch. 6]{Luo99}), networks of open channels or transmission lines (see, e.\:g., \cite{Bas16}). 

A first solution of the state feedback stabilization problem for coupled linear heterodirectional hyperbolic PDE-ODE systems with constant coefficients was given in \cite{Di16}. Thereby, the coupling between the ODE and the PDE appears at the uncontrolled boundary. The corresponding design is based on a stabilizing backstepping transformation combined with the decoupling transformation to map the plant into the target system. This is also the starting point of all aforementioned results concerning heat and wave equations. Thereby, the decoupling transformation is needed to ensure the decoupling into a PDE-ODE cascade, which is the desired overall target system. As a consequence, the \emph{kernel equations} defining the backstepping transformation and the \emph{decoupling equations} to be solved for the decoupling coordinates are coupled, too. This leads to new kernel equations, which are a system of coupled transport equations connected with a system of coupled ODEs. Hence, a rather involved constructive proof for their solvability is necessary.

Clearly, the results in \cite{Di16} are of great value in itself as they provide a solution method for a rather general class of kernel equations. Nevertheless, an alternative method for the backstepping stabilization of coupled linear heterodirectional hyperbolic PDE-ODE systems is proposed in this paper. The main idea is based on the fact that the stabilization of PDE-ODE system actually leads to two problems. Namely, the backstepping stabilization of the PDE subsystem and the decoupling into a stable PDE-ODE cascade. In order to fully exploit the potential of the backstepping method the PDE subsystem is mapped in the first step into backstepping coordinates. The resulting simple structure of the PDE target system significantly facilitates the decoupling into a PDE-ODE cascade in the second step. As a consequence, only the conventional kernel equations have to be solved, whereby the decoupling equations take a simple form. This \emph{two-step approach} was first proposed for DPS with spatially-varying coefficients in \cite{Deu16a} concerning parabolic PDE-ODE cascades and subsequently for $2 \times 2$ hyperbolic PDE-ODE cascades in \cite{Deu16b}. Therein, the resulting decoupling equations become explicitly solvable in the second step, which leads to a systematic stabilization procedure. These results suggest to extend this method also to linear heterodirectional hyperbolic systems coupled with an ODE.

In this paper the output feedback stabilization of general heterodirectional systems with spatially-varying coefficients coupled with an ODE is considered. Thereby, the coupling of the ODE to the PDE subsystem appears at the uncontrolled boundary and in-domain in the PDE, whereas the ODE is subject to a coupling with the uncontrolled boundary. The two-step approach is extended to the considered class of PDE-ODE systems in order to design  the state feedback controller. More precisely, in the first step a backstepping transformation is utilized to map the hyperbolic subsystem into its target system, which is a cascade of transport equations. For this, only the conventional kernel equations found in \cite{Hu15b} have to be solved. This significantly simplifies the calculation of the decoupling coordinates in the second step. Thereby, the inverse decoupling transformation is determined, because the related decoupling equations have a very simple structure when compared to the direct transformation. In particular, as decoupling equations a set of coupled ODEs and a set of decoupled transport equations are obtained. Thereby, the ODEs are not coupled with the PDEs so that an explicit solution of the former is possible. Furthermore, it is shown that the solution of the PDEs can be traced back to solving  $p^2$ simple scalar \emph{Volterra integral equations of the second kind}, if the plant has $p$ inputs. This  results in a systematic method for determining the decoupling coordinates, because the solution of the related integral equations can readily be obtained from utilizing a truncated fixpoint iteration. Subsequently, the obtained state feedback controller is implemented with an observer. By assuming anticollocated measurements a systematic method is proposed for the corresponding PDE-ODE observer design. This extends recent results concerning observers for general heterodirectional PDE-ODE cascades with constant coefficients in \cite{An16} and in \cite{Deu16c} for the spatially-varying case. For the existence of the corresponding PDE-ODE observers and thus of the resulting observer-based compensator simple conditions are presented in terms of the transfer behaviour w.r.t. the infinite-dimensional subsystem. This yields a systematic backstepping method to the output feedback control for a large class of coupled linear heterodirectional PDE-ODE systems.

The next section introduces the considered stabilization problem. Then, the two-step approach for the state feedback design is presented in Section \ref{sec:statefeed}. A systematic solution of the decoupling equations is given in the subsequent section. By assuming anticollocated measurements the observer design is presented in Section \ref{sec:obs}. The theoretical part of the paper is concluded with the proof of closed-loop stability in Section \ref{sec:clstab}. A $4 \times 4$ heterodirectional hyperbolic system with a dynamic boundary condition is utilized to demonstrate the results of the article. 

\section{Problem formulation}\label{sec:probform}
Consider the \emph{general linear hyperbolic PDE-ODE system}
\begin{subequations}\label{plant}
	\begin{align}
	\partial_tx(z,t) &= \Lambda(z)\partial_zx(z,t) \!+\! A(z)x(z,t) \!+\! C_1(z)\xi(t), && (z,t) \in (0,1) \times \mathbb{R}^+ \label{xeq}\\
	x_2(0,t) &= Q_0x_1(0,t) +  C_2\xi(t), &&  t > 0\label{uabc}\\
	x_1(1,t) &= Q_1x_2(1,t) + u(t), &&  t > 0\label{aktbc}\\
	\dot{\xi}(t) &= F\xi(t) + Bx_1(0,t), && t > 0\label{plantode}\\
        	y(t) &= x_1(0,t), &&  t \geq 0,\label{meas}
	\end{align}
\end{subequations}
that consists of  $n$ coupled \emph{transport PDEs} \eqref{xeq} with the distributed state $x(z,t) = [x^1(z,t) \;\; \ldots \;\; x^n(z,t)]^T \in \mathbb{R}^{n}$, the ODE \eqref{plantode} with the lumped state $\xi(t) \in \mathbb{R}^{n_{\xi}}$, the input $u(t) \in \mathbb{R}^p$  and the \emph{anticollocated measurement} $y(t) \in \mathbb{R}^p$. Furthermore, $Q_0 \in \mathbb{R}^{m \times p}$ and $Q_1 \in \mathbb{R}^{p \times m}$ with $p+m = n$ and $p, m \geq 1$ are arbitrary matrices and $\Lambda(z)$ in \eqref{xeq} is given by
\begin{align}\label{Lamdef}
\Lambda(z)  &= \operatorname{diag}(\lambda_1(z),\ldots,\lambda_{n}(z))\nonumber\\
&=\operatorname{diag}(\epsilon_1(z),\ldots,\epsilon_p(z),-\epsilon_{p+1}(z),
\ldots,-\epsilon_{n}(z))
\end{align}
where $\epsilon_i \in C^1[0,1]$, $i = 1,2,\ldots,n$, and $\epsilon_1(z) > \ldots > \epsilon_{p}(z) > 0 > -\epsilon_{p+1}(z) > \ldots > -\epsilon_{n}(z)$, $z \in [0,1]$.  Moreover, the matrix $A(z) = [A_{ij}(z)]$ in \eqref{xeq} satisfies $A_{ii}(z) = 0$, $z \in [0,1]$, $i = 1,2,\ldots,n$, and $A_{ij} \in C^1[0,1]$, $i,j = 1,2,\ldots,n$,  $C_1 \in (L_2(0,1))^{n \times n_{\xi}}$ and $C_2 \in \mathbb{R}^{m \times n_{\xi}}$.

In order to obtain a compact representation of the results, the matrices 
\begin{equation}\label{Edef}
E_1 = \begin{bmatrix}
I_p\\
0
\end{bmatrix} \in \mathbb{R}^{n \times p}\quad \text{and} \quad 
E_2 = \begin{bmatrix}
0\\
I_m
\end{bmatrix} \in \mathbb{R}^{n \times m}
\end{equation}
are defined. With this, the states $x_1(z,t) = E_1^Tx(z,t) \in \mathbb{R}^p$ describe the convection in the negative direction of the spatial coordinate $z$ with the velocities $\epsilon_i(z)$, $i = 1,2,\ldots,p$. The remaining states $x_2(z,t) = E_2^Tx(z,t) \in \mathbb{R}^m$ with the velocities $\epsilon_{i}(z)$, $i = p+1,\ldots,n$, take the convection in the $z$-direction into account. Hence, the distributed-parameter subsystem \eqref{xeq}--\eqref{aktbc} is a \emph{heterodirectional system} (see \cite{Hu15a}). The matrix pair $(F,B)$ characterizing the ODE \eqref{plantode} with $F \in \mathbb{R}^{n_{\xi} \times n_{\xi}}$ and $B \in \mathbb{R}^{n_{\xi} \times p}$ is assumed to be stabilizable. Finally, the \emph{initial conditions (IC)} of \eqref{plant} are $x(z,0) = x_0(z) \in \mathbb{R}^{n}$, $z \in [0,1]$, and $\xi(0) = \xi_0 \in \mathbb{R}^{n_{\xi}}$.

\begin{rem}
The considered plant \eqref{plant} comprises \emph{bidirectionally coupled PDE-ODE systems} (i.\:e., at least one of the $C_i$, $i = 1,2$, is not vanishing). An important example is the case $C_1(z) = 0$ and $C_2 \neq 0$, which  appears if the DPS is coupled with a lumped-parameter system at the unactuated boundary $z = 0$. This gives rise to a \emph{dynamic boundary condition (BC)}. Finally, for $C_i = 0$, $i = 1,2$, a \emph{PDE-ODE cascade} is obtained.
	\hfill $\triangleleft$
\end{rem}

\begin{rem}
	It should be noted that the assumed form of \eqref{xeq} can always be obtained from the general case, in which $\Lambda(z)$ and $A(z)$ are arbitrary matrices with elements in $C^1[0,1]$. This is possible by making use of the transformations given in \cite[Ch. 6.9]{Deb12,Vaz14,La15}. To this end, the corresponding system has to be \emph{strictly hyperbolic} and heterodirectional. \hfill $\triangleleft$
\end{rem}


This paper concerns the \emph{backstepping design} of a \emph{compensator}
	that stabilizes the resulting closed-loop system.

\section{State feedback design}\label{sec:statefeed}
Consider the \emph{state feedback controller}
	\begin{equation}\label{sfeed}
	u(t) =  - Q_1x_2(1,t) + \mathcal{K}[\xi(t),x(t)]
	\end{equation}
	with the formal \emph{feedback operator}
	\begin{equation}\label{fop}
	\mathcal{K}[\xi(t),x(t)] = -K_{\xi}\xi(t) -\textstyle\int_0^1K_x(z)x(z,t)\d z.
	\end{equation}
By inserting \eqref{sfeed} into \eqref{plant} the \emph{closed-loop system}
\begin{subequations}\label{plant2}
	\begin{align}
	\partial_tx(z,t) &= \Lambda(z)\partial_zx(z,t) + A(z)x(z,t) + C_1(z)\xi(t) \label{PDE2}\\
	x_2(0,t) &= Q_0x_1(0,t) + C_2\xi(t)\label{rb12}\\     
	x_1(1,t) &= \mathcal{K}[\xi(t),x(t)]\label{rb22}\\
	\dot{\xi}(t) &= F\xi(t) + Bx_1(0,t)\label{ODE2}
	\end{align}
\end{subequations}
is obtained. This is a PDE-ODE system with a bidirectional coupling in \eqref{rb12} and \eqref{ODE2}. Hence, in order to apply the backstepping method for the design of the state feedback \eqref{sfeed} a decoupling of the closed-loop system into a \emph{PDE-ODE cascade} has to be considered. Then, the controller can be derived from the stabilization of the resulting PDE and ODE subsystems. The calculation of the corresponding decoupling transformation can be significantly simplified if the closed-loop system \eqref{plant2} is mapped into backstepping coordinates. This is shown in the next section.

\subsection{Backstepping Transformation}
Consider the invertible \emph{backstepping transformation}
\begin{align}\label{btrafo}
\tilde{x}(z,t) = x(z,t) - \textstyle\int_0^zK(z,\zeta)x(\zeta,t)\d \zeta = \mathcal{T}_1[x(t)](z)
\end{align}
with the \emph{integral kernel} $K(z,\zeta) \in \mathbb{R}^{n \times n}$ (see \cite{Hu15a,Hu15b}). It is assumed that $K(z,\zeta)$ is the solution of the \emph{kernel equations} 
\begin{subequations}\label{ckbvp}
	\begin{align}
	\hspace{-0.3cm}\Lambda(z)\partial_zK(z,\zeta) + \partial_{\zeta}(K(z,\zeta)\Lambda(\zeta)) &= K(z,\zeta)A(\zeta), \quad 0 < \zeta < z < 1 \label{cdbvp1}\\
	K(z,0)\Lambda(0)(E_1 + E_2Q_0) &= A_0 (z)\label{cdbvp3}\\
	K(z,z)\Lambda(z) - \Lambda(z) K(z,z) &= A(z).\label{cdbvp2}
	\end{align}
\end{subequations}
Therein, the matrix $A_0(z)$ is given by
\begin{equation}\label{A0def}
A_0(z) = \begin{bmatrix}
A_{1}(z)\\ 
A_{2}(z)
\end{bmatrix},
\end{equation}
in which $A_{1}(z) \in \mathbb{R}^{p \times p}$ is \emph{strictly lower triangular}, i.\:e.,
\begin{equation}\label{a01def}
A_{1}(z) = \begin{bmatrix} 0         & \ldots &   \ldots          & 0\\
a_{21}(z) & \ddots       &  \ddots   & \vdots\\
\vdots    & \ddots & \ddots            & \vdots \\
a_{p1}(z) & \ldots & a_{p\,p-1}(z) & 0
\end{bmatrix}
\end{equation}
and $A_{2}(z) \in \mathbb{R}^{m \times p}$ has no special form. Thereby, the elements of the strictly lower triangular part of \eqref{a01def} and of $A_2(z)$ are determined by the kernel (for details see the Appendix \ref{appA}). With the \emph{method of characteristics} the boundary value problem (BVP) \eqref{ckbvp} can be converted into integral equations. The latter are solvable by means of a fixpoint iteration. This allows to show that a unique piecewise $C^1$-solution $K(z,\zeta)$ of \eqref{ckbvp} exists (see \cite{Hu15b}). Hence, the kernel is attainable by utilizing a \emph{successive approximation}. Further details for solving \eqref{ckbvp} are provided in the Appendix \ref{appA}. 

Differentiating \eqref{btrafo} w.r.t. time, utilizing \eqref{plant}, \eqref{fop}  \eqref{ckbvp} and $\tilde{x}_i = E_i^T\tilde{x}$, $i = 1,2$, results in the closed-loop system 
\begin{subequations}\label{plant3}
	\begin{align}
	\partial_t\tilde{x}(z,t) &= \Lambda(z)\partial_z\tilde{x}(z,t) + A_0(z)\tilde{x}_1(0,t)  + G(z)\xi(t),&& (z,t) \in (0,1) \times \mathbb{R}^+\label{PDE3}\\
	\tilde{x}_2(0,t) &= Q_0\tilde{x}_1(0,t) + C_2\xi(t),&& t > 0\label{rb13}\\     
	\tilde{x}_1(1,t) &= \mathcal{K}[\xi(t),x(t)] \!- \! \textstyle\int_0^1E_1^TK(1,z)x(z,t)\d z,&& t > 0\label{rb23}\\
	\dot{\xi}(t) &= F\xi(t) + B\tilde{x}_1(0,t),&& t > 0\label{ODE3}
	\end{align}
\end{subequations}
in backstepping coordinates. Therein, the matrix $G(z)$ in \eqref{PDE3} follows as
\begin{equation}\label{Gdef}
G(z) = K(z,0)\Lambda(0)E_2C_2 + C_1(z) - \textstyle\int_0^zK(z,\zeta)C_1(\zeta)\d\zeta.
\end{equation}

\subsection{Decoupling into a PDE-ODE cascade}\label{sec:pdeodedecoupl}
There exist in principle two possibilities for the decoupling of \eqref{plant3}: the decoupling of the ODE subsystem \eqref{ODE3} or the decoupling of the PDE subsystem \eqref{PDE3}--\eqref{rb23} from the corresponding subsystem. It can be shown that the former approach leads to decoupling equations, which are not solvable. Hence, one has to pursue the second approach. For this, introduce the \emph{decoupling coordinates} $e_x$ in form of 
the \emph{inverse decoupling transformation}
\begin{equation}\label{ccord2}
\tilde{x}(z,t) =  \mathcal{T}^{-1}_2[e_x(t)](z) + N_I(z)\xi(t).
\end{equation}
Therein, 
\begin{equation}\label{invT2}
\mathcal{T}_2^{-1}[e_x(t)](z) = e_x(z,t) + \textstyle\int_0^zP_I(z,\zeta)e_x(\zeta,t)\d \zeta
\end{equation}
represents the inverse Volterra-type transformation with the integral kernel $P_I(z,\zeta) \in \mathbb{R}^{n \times n}$ w.r.t. the transformation
\begin{align}\label{T2}
\mathcal{T}_2[\tilde{x}(t)](z) = \tilde{x}(z,t) - \textstyle\int_0^zP(z,\zeta)\tilde{x}(\zeta,t)\d \zeta
\end{align}
with the integral kernel $P(z,\zeta) \in \mathbb{R}^{n \times n}$  and $N_I(z) \in \mathbb{R}^{n  \times n_{\xi}}$ holds. 
\begin{rem}
	The direct transformation 
	\begin{equation}\label{ccord}
	e_x(z,t) =  \mathcal{T}_2[\tilde{x}(t)](z) - N(z)\xi(t)
	\end{equation}
	yields \emph{decoupling equations} to be fulfilled by $P(z,\zeta)$ and $N(z) = \mathcal{T}_2[N_I](z)$ that consists of a BVP mutually coupled with an initial value problem (IVP). Their solution can also be traced back to solving Volterra integral equations of the second kind. However, due to the coupling between the BVP and IVP the corresponding kernels become very involved for $p > 2$ and thus are hard to determine in general. As will be shown in the sequel, the corresponding \emph{inverse decoupling equations} determining \eqref{ccord2} have a significantly simpler structure, which leads to a systematic method for the decoupling into a PDE-ODE cascade.  \hfill $\triangleleft$
\end{rem}

For the derivation of the inverse decoupling equations the \emph{target system} in form of the \emph{PDE-ODE cascade} 
\begin{subequations}\label{pde-ode cascade}
	\begin{align}
	\partial_te_x(z,t) &= \Lambda(z)\partial_ze_x(z,t) + H_0(z)e_{x_1}(0,t), &&(z,t) \in (0,1) \times \mathbb{R}^+ \label{casdpde}\\
	e_{x_2}(0,t) &= Q_0e_{x_1}(0,t),&& t > 0\label{cascpderb1}\\
	e_{x_1}(1,t) &= 0, &&t > 0\label{cascrb2}\\
	\dot{\xi}(t) &= (F - BK)\xi(t) + Be_{x_1}(0,t), && t > 0\label{cascode}
	\end{align}	 
\end{subequations}
and $e_{x_i} = E_i^Te_x$, $i = 1,2$, is proposed. Therein, the matrix 
\begin{equation}\label{H0def}
H_0(z) = \begin{bmatrix}
H_{1}(z)\\ 
H_{2}(z) 
\end{bmatrix} \in \mathbb{R}^{n \times p}
\end{equation}
in \eqref{casdpde} with
\begin{equation}\label{h01def}
H_{1}(z) = \begin{bmatrix} 0         & \ldots &  \ldots           & 0\\
h_{21}(z) & \ddots &  \ddots         & \vdots\\
\vdots    & \ddots &  \ddots     & \vdots \\
h_{p1}(z) & \ldots & h_{p\,p-1}(z) & 0
\end{bmatrix} \in \mathbb{R}^{p \times p}
\end{equation}
and $H_2(z) \in \mathbb{R}^{m \times p}$ has to be introduced in order to ensure well-posedness of the resulting inverse decoupling equations (see the next section). 
By making use of the results in \cite{Hu15b} it follows that the PDE-subsystem \eqref{casdpde}--\eqref{cascrb2} is \emph{finite-time stable} for piecewise continuous IC, i.\:e., 
\begin{equation}\label{eq:tc}
 e_x(z,t) = 0 , \quad t \geq t_c = \sum_{i=1}^{p+1}|\phi_i(1)|. 
\end{equation}	
Hence, if $K$ ensures that $F - BK$ is Hurwitz, then the target system \eqref{pde-ode cascade} is asymptotically stable. Such a feedback gain $K$ always exists, as $(F,B)$ is stabilizable by assumption.

Differentiating \eqref{ccord2} w.r.t. time and inserting \eqref{casdpde} and \eqref{ODE3} leads  to
\begin{align}\label{expre}
\partial_t\tilde{x}(z,t) &= \partial_te_x(z,t)  + \textstyle\int_0^zP_I(z,\zeta)\partial_te_x(\zeta,t)\d \zeta + N_I(z)\dot{\xi}(t)\nonumber\\
&= \Lambda(z)\partial_z\tilde{x}(z,t) + A_0(z)\tilde{x}_1(0,t) + G(z)\xi(t)\nonumber\\
&\quad  + (\mathcal{T}^{-1}_2[H_0](z) - A_0(z)+ N_I(z)B)e_{x_1}(0,t)\nonumber\\
&\quad + (N_I(z)(F - BK) - G(z) - \Lambda(z)N'_I(z)\nonumber\\
&\quad - A_0(z)E_1^TN_I(0))\xi(t) - \Lambda(z)\partial_z\textstyle\int_0^zP_I(z,\zeta)e_x(\zeta,t)\d\zeta\nonumber\\
&\quad + \textstyle\int_0^zP_I(z,\zeta)\Lambda(\zeta)\partial_{\zeta}e_x(\zeta,t)\d\zeta. \end{align}
By utilizing an integration by parts and applying the Leibniz differentiation rule one obtains
\begin{align}\label{expre2}
\partial_t\tilde{x}(z,t) &= \Lambda(z)\partial_z\tilde{x}(z,t) + A_0(z)\tilde{x}_1(0,t) + G(z)\xi(t)\nonumber\\
&\quad  - \textstyle\int_0^z(\Lambda(z)\partial_zP_I(z,\zeta) + \partial_{\zeta}(P_I(z,\zeta)\Lambda(\zeta)))e_x(\zeta,t)\d\zeta\nonumber\\
&\quad  + (N_I(z)B - A_0(z) + \mathcal{T}^{-1}_2[H_0](z)\nonumber\\
&\quad    - P_I(z,0)\Lambda(0)(E_1 + E_2Q_0))e_{x_1}(0,t)\nonumber\\
&\quad  + (P_I(z,z)\Lambda(z) - \Lambda(z)P_I(z,z))e_x(z,t)\nonumber\\
&\quad  + (N_I(z)(F-BK) - \Lambda(z)N_I'(z) - A_0(z)E_1^TN_I(0)\nonumber\\
&\quad  - G(z))\xi(t).
\end{align}
Evaluating \eqref{ccord2} at $z = 0$, taking \eqref{rb13} and \eqref{cascpderb1} into account yields
\begin{align}\label{rbz=0}
\tilde{x}_2(0,t) &= e_{x_2}(0,t) + E_2^TN_I(0)\xi(t)\nonumber\\ 
&= Q_0e_{x_1}(0,t) + E_2^TN_I(0)\xi(t)\nonumber\\
&= Q_0e_{x_1}(0,t) + (Q_0E_1^TN_I(0) + C_2)\xi(t),
\end{align}
which implies $E_2^TN_I(0) = Q_0E_1^TN_I(0) + C_2$. Inserting \eqref{ccord2} in \eqref{ODE3} gives
\begin{align}\label{ODEdecoupl}
\dot{\xi}(t) = (F + BE_1^TN_I(0))\xi(t) + Be_{x_1}(0,t)
\end{align}
so that $E_1^TN_I(0) = -K$ follows in light of \eqref{cascode}. In order to determine the feedback operator \eqref{fop}, consider \eqref{ccord2} at $z = 1$ and utilize \eqref{rb23} giving
\begin{align}\label{rbz=1}
\tilde{x}_1(1,t) &= e_{x_1}(1,t) + \textstyle\int_0^1E_1^TP_I(1,z)e_x(z,t)\d z
 + E_1^TN_I(1)\xi(t)\nonumber\\ 
&=  \mathcal{K}[\xi(t),x(t)] - \textstyle\int_0^1E_1^TK(1,z)x(z,t)\d z.
\end{align}
Consequently, \eqref{cascrb2} implies
\begin{equation}\label{contcalc}
\mathcal{K}[\xi(t),x(t)] = \textstyle\int_0^1E_1^TK(1,z)x(z,t)\d z
 + \textstyle\int_0^1E_1^TP_I(1,z)e_x(z,t)\d z
  + E_1^TN_I(1)\xi(t).
\end{equation} 
Hence, it can be directly inferred from \eqref{expre2}--\eqref{ODEdecoupl} that the closed-loop system \eqref{plant3} is mapped into the PDE-ODE cascade \eqref{pde-ode cascade}
if $N_I(z)$ and $P_I(z,\zeta)$ are the solution of the \emph{inverse decoupling equations}
\begin{subequations}\label{decouplODE}
	\begin{align}
	\Lambda(z)N'_I(z) - N_I(z)(F-BK) &= - A_0(z)E_1^TN_I(0) - G(z), \quad z \in (0,1)\label{NODE}\\
	(E_2^T - Q_0E_1^T)N_I(0) &= C_2\label{Kbed1}\\
	E_1^TN_I(0) &= -K\label{Kbed}
	\end{align}
\end{subequations}
and 
\begin{subequations}\label{decouplBVP}
	\begin{align}
	\Lambda(z)\partial_zP_I(z,\zeta) + \partial_{\zeta}(P_I(z,\zeta)\Lambda(\zeta)) &= 0, \quad 0 < \zeta < z < 1\label{couplpde}\\
	P_I(z,0)\Lambda(0)(E_1 + E_2Q_0) &= N_I(z)B - A_0(z)+ \mathcal{T}^{-1}_2[H_0](z)\label{BCcomp}\\
	P_I(z,z)\Lambda(z) - \Lambda(z)P_I(z,z) &= 0.\label{comu}
	\end{align}
\end{subequations}
\begin{rem}	
The simple form of the PDE \eqref{couplpde} results from considering the decoupling  in the backstepping coordinates. In particular, this PDE consists of $n$ decoupled transport equations. If a decoupling in the original coordinates $x(z,t)$ is considered, i.\:e., the backstepping transformation \eqref{btrafo} is combined with the decoupling coordinates \eqref{ccord}, then one obtains a PDE of the form \eqref{cdbvp1}. Hence, one has to solve a set of coupled transport equations with spatially-varying coefficients, that is additionally coupled with the IVP for $N(z)$. This would render the solution of the combined kernel and decoupling equations much more difficult. Furthermore, the consideration of the inverse decoupling equations yields the IVP \eqref{decouplODE} for $N_I(z)$, which is decoupled from the BVP \eqref{decouplBVP}. This further significantly simplifies the derivation of the corresponding Volterra integral equations to be determined for solving \eqref{decouplBVP} (see next section). \hfill $\triangleleft$
\end{rem}

In order to make \eqref{contcalc} more explicit, the \emph{reciprocity relations} for \eqref{invT2} and \eqref{T2} are derived. Inserting  \eqref{T2} in  \eqref{invT2} and changing the order of integration yields the \emph{reciprocity relation} 
\begin{equation}\label{recrel}
P(z,\zeta) + \textstyle\int_{\zeta}^{z}P_I(z,\zeta')P(\zeta',\zeta)\d\zeta' = P_I(z,\zeta).
\end{equation}  
This is a Volterra integral equation for the kernel $P(z,\zeta)$ of \eqref{T2}. Hence, if $P_I(z,\zeta)$ is well-defined and bounded so is $P(z,\zeta)$ (see, e.\:g., \cite[Th. 3.4]{Lin85}) implying that $\mathcal{T}_2[\cdot]$ exists. Similarly, by substituting \eqref{invT2} in \eqref{T2} and changing the order of integration one obtains the alternative \emph{reciprocity relation} 
\begin{equation}\label{recrelalt}
P(z,\zeta) + \textstyle\int_{\zeta}^{z}P(z,\zeta')P_I(\zeta',\zeta)\d\zeta' = P_I(z,\zeta).
\end{equation}  
In order to simplify \eqref{contcalc} insert \eqref{ccord} in \eqref{contcalc}, use \eqref{T2}, change the order of integration and utilize \eqref{recrel} to obtain
\begin{align}\label{contcalc2}
\mathcal{K}[\xi(t),x(t)]
&= \textstyle\int_0^1E_1^T(K(1,z) + P(1,z)
  - \textstyle\int_z^1P(1,\zeta)K(\zeta,z)\d\zeta)x(z,t)\d z\nonumber\\
&\quad +  E_1^T(N_I(1) - \textstyle\int_0^1P(1,z)N_I(z)\d z)\xi(t).
\end{align}
From this, the feedback gains
\begin{subequations}\label{gains}
	\begin{align}
	K_{\xi} &= E_1^T(\textstyle\int_0^1P(1,z)N_I(z)\d z - N_I(1))\\
	K_x(z) &= E_1^T(\textstyle\int_z^1P(1,\zeta)K(\zeta,z)\d\zeta - K(1,z) - P(1,z))
	\end{align}
\end{subequations}
can be directly deduced in view of \eqref{fop}. 

Both reciprocity relations \eqref{recrel} or \eqref{recrelalt} can be utilized to calculate $P(z,\zeta)$. However, the result \eqref{gains} shows that for determining the feedback operator \eqref{contcalc2} only $P(1,\zeta)$ has to be known. In order to get $P(1,\zeta)$ set $z = 1$ in \eqref{recrelalt}. This results in the \emph{Volterra integral equation of the second kind}
\begin{equation}\label{recrelp1z}
P(1,\zeta) + \textstyle\int_{\zeta}^{1}P(1,\zeta')P_I(\zeta',\zeta)\d\zeta' = P_I(1,\zeta)
\end{equation}
for $P(1,\zeta)$. Hence, the feedback gains  \eqref{gains} can be derived from the solution $P_I(z,\zeta)$ and $N_I(z)$ of the inverse decoupling equations \eqref{decouplODE} and \eqref{decouplBVP}.

\section{Solution of the Inverse Decoupling Equations}\label{sec:soldecoupl}
The fact that the IVP \eqref{decouplODE} is decoupled from the BVP \eqref{decouplBVP} allows to determine the solution of $N_I(z)$ independently. Subsequently, the resulting $N_I(z)$ can be utilized for solving \eqref{decouplBVP}. 

\subsection{Solution of the IVP for $N_I(z)$}\label{sec:N_isolve}
For the solution of the IVP \eqref{decouplODE}  it is convenient to split the corresponding equations into two IVP by defining
\begin{equation}\label{Nidef}
M_i(z) = E_i^TN_I(z), \quad i = 1,2.
\end{equation}
In order to obtain the ODEs for \eqref{Nidef} write \eqref{NODE}  in the form
\begin{equation}\label{NODE2} 
N_I'(z) = \Lambda^{-1}(z)(N_I(z)(F-BK) + A_0(z)K - G(z)),
\end{equation}
in which \eqref{Kbed} has been inserted. Then, premultiply \eqref{NODE2} by $E_1E_1^T + E_2E_2^T = I_n$ and utilize $E_i^T\Lambda^{-1}(z) = \Lambda_i^{-1}(z)E_i^T$, $i = 1,2$, with $\Lambda_i^{-1}(z) = E_i^T\Lambda^{-1}(z)E_i$.  This yields
\begin{multline}\label{Niodes}
E_1M'_1(z) + E_2M'_2(z)
= E_1\Lambda^{-1}_1(z)(M_1(z)(F-BK) + A_1(z)K - E_1^TG(z))\\
\quad + E_2\Lambda^{-1}_2(z)(M_2(z)(F-BK) + A_2(z)K - E_2^TG(z)).
\end{multline}
Hence, equating \eqref{Niodes} w.r.t. $E_1$ and $E_2$ as well as taking \eqref{Kbed1} and \eqref{Kbed} into account leads to the IVP
\begin{subequations}\label{ivpN1}
	\begin{align}
	M_1'(z) &= \Lambda^{-1}_1(z)(M_1(z)(F-BK) + A_1(z)K - E_1^TG(z)), \quad z \in (0,1)\label{N1ode}\\
	M_1(0) &= -K
	\end{align}
\end{subequations}
and 
\begin{subequations}\label{ivpN2}
	\begin{align}
	M_2'(z) &= \Lambda^{-1}_2(z)(M_2(z)(F-BK) + A_2(z)K - E_2^TG(z)), \quad z \in (0,1)\label{N2ode}\\
	M_2(0) &= C_2 - Q_0K.\label{N2bc}
	\end{align}
\end{subequations}

In order to solve \eqref{ivpN1} and \eqref{ivpN2} the \emph{fundamental matrices} $\Psi_i(z,\zeta): [0,1]^2 \to \mathbb{R}^{n_{\xi} \times n_{\xi}}$, $i = 1,2,\ldots,n$, resulting from the IVP
\begin{equation}\label{psiivp}
\partial_z\Psi_i(z,\zeta) = \textstyle\frac{1}{\lambda_i(z)}(F-BK)\Psi_i(z,\zeta), \quad \Psi_i(\zeta,\zeta) = I
\end{equation}
are needed. It is easy to show that the solution of \eqref{psiivp} is
\begin{equation}\label{fmatdef}
\Psi_i(z,\zeta) = \text{e}^{(F-BK)(\phi_i(z) - \phi_i(\zeta))}
\end{equation}
with
\begin{align}\label{phidef}
\phi_i(z) = \textstyle\int_0^{z}\textstyle\frac{\d\zeta}{\lambda_i(\zeta)},\quad z \in [0,1], \quad i = 1,2,\ldots,n.
\end{align}
For determining $M_1(z)$ premultiply \eqref{ivpN1} by $e_i^T$ with $e_i$ denoting the $i$-th unit vector in $\mathbb{R}^p$ and introduce the abbreviation $\nu_i^T(z) = e_i^TM_1(z)$, $i = 1,2,\ldots,p$. With this, one obtains the set of IVP
\begin{subequations}\label{ivpN2setm}
	\begin{align}
	\d_z\nu_i^T(z) &= \textstyle\frac{1}{\lambda_i(z)}\nu_i^T(z)(F-BK)\label{N2odem}
 + \textstyle\frac{1}{\lambda_i(z)}e_i^T(A_1(z)K - E_1^TG(z)), \quad z \in (0,1)\\
	\nu_i^T(0) &= -e_i^TK.\label{N2bcm}
	\end{align}
\end{subequations}
Their solution is 
\begin{equation}
\nu_i^T(z) = -e_i^TK\Psi_i(z,0) + \textstyle\int_0^z\frac{1}{\lambda_i(\zeta)}e_i^T(A_1(\zeta)K - E_1^TG(\zeta))\Psi_i(z,\zeta)\d\zeta
\end{equation}
for $i = 1,2,\ldots,p$ in view of \eqref{fmatdef}. From this, the piecewise classical solution $M_1(z)$ of \eqref{ivpN1} directly follows, because $G(z)$ is piecewise continuous (see \eqref{Gdef} and the Appendix \ref{appA}). Similarly, by defining $\mu_i^T(z) = e_i^TM_2(z)$, $i = 1,2,\ldots,m$, with the unit vector $e_i \in \mathbb{R}^m$ and utilizing the same reasoning as for determining $M_1(z)$ gives the piecewise classical solution
\begin{equation}
\mu_i^T(z) = e_i^T(C_2 - Q_0K)\Psi_{i+p}(z,0)
+ \textstyle\int_0^z\frac{1}{\lambda_{i+p}(\zeta)}e_i^T(A_2(\zeta)K - E_2^TG(\zeta))\Psi_{i+p}(z,\zeta)\d\zeta
\end{equation}
of \eqref{ivpN2} for $i = 1,2,\ldots,m$. From these results, the piecewise classical solution of \eqref{decouplODE} results as
\begin{equation}
N_I(z) = E_1M_1(z) + E_2M_2(z).
\end{equation}

\subsection{Solution of the BVP for $P_I(z,\zeta)$}\label{sec:P_isolve}
In what follows, it is shown that the special form
\begin{equation}\label{Pdef}
P_I(z,\zeta) = \begin{bmatrix}
P_{I,1}(z,\zeta) & 0 \\
0 & 0
\end{bmatrix}              
\end{equation}
of the kernel $P_I(z,\zeta)$ with the upper triangular matrix
\begin{align}\label{P1def}
P_{I,1}(z,\zeta) &= E_1^TP_I(z,\zeta)E_1\nonumber\\
&=
\begin{bmatrix} p_{I,11}(z,\zeta)        & \ldots              &  \ldots  & p_{I,1p}(z,\zeta)\\
0                        & \ddots              & \ddots     & \vdots\\
\vdots                   & \ddots              &  \ddots    & \vdots\\
0                        & \ldots              &   0 & p_{I,pp}(z,\zeta)
\end{bmatrix} \in \mathbb{R}^{p \times p}
\end{align}
is sufficient to satisfy the BVP \eqref{decouplBVP}. To this end, premultiply the BC \eqref{BCcomp} by $E_1E_1^T + E_2E_2^T = I_n$ and equate coefficients w.r.t. $E_1$ and $E_2$ giving
\begin{equation}\label{bcsplit1}
E_1^TP_I(z,0)\Lambda(0)(E_1 + E_2Q_0) = E_1^T(N_I(z)B - A_0(z) + \mathcal{T}^{-1}_2[H_0](z))
\end{equation}
and
\begin{equation}\label{bcsplit2}
E_2^TP_I(z,0)\Lambda(0)(E_1 + E_2Q_0) = E_2^T(N_I(z)B - A_0(z) + \mathcal{T}^{-1}_2[H_0](z)). 
\end{equation}
Then, solving \eqref{bcsplit2} for $E_2^TH_0(z) = H_2(z)$ yields
\begin{equation}\label{bch2}
H_2(z) = -M_2(z)B + A_2(z)
\end{equation}
in view of \eqref{invT2}, \eqref{Nidef}, \eqref{Pdef} and $A_2(z) = E_2^TA_0(z)$. Let $[M]_{i \geq j}$ and  $[M]_{i \leq j}$ be the elements $m_{ij}$ of a matrix $M = [m_{ij}]$, that satisfy $i \geq j$ and $i \leq j$, respectively. This means that the \emph{lower} and the \emph{upper triangular part} of $M$ is extracted. Similarly, $[M]_{i > j}$ and $[M]_{i < j}$ are the elements $m_{ij}$ with $i > j$ and $i < j$, respectively. Then, applying $[\,\cdot\,]_{i > j}$ to \eqref{bcsplit1} and using \eqref{invT2}, \eqref{Nidef}, \eqref{Pdef}, $A_1(z) = E_1^TA_0(z)$ and $H_1(z) = E_1^TH_0(z)$ results in
\begin{equation}
[H_1(z)]_{i > j}\label{PBCresh}
 = [-M_1(z)B - \textstyle\int_0^zP_{I,1}(z,\zeta)H_1(\zeta)\d\zeta + A_1(z)]_{i > j},
\end{equation}
where $[E_1^TP_{I}(z,0)\Lambda(0)(E_1 + E_2Q_0)]_{i > j} = 0$ follows from \eqref{Pdef} and \eqref{P1def}. This determines the elements of the strictly lower triangular submatrix in $H_1(z)$ (see \eqref{h01def}). Finally, with \eqref{Nidef}, $\Lambda(0)E_i = E_i\Lambda_i(0)$, $i = 1,2$, and $\Lambda_i(0) = E_i^T\Lambda(0)E_i$ taking into account the conditions of the remaining elements in \eqref{bcsplit1} are
\begin{equation}\label{PBCres}
[P_{I,1}(z,0)\Lambda_1(0)]_{i \leq j} = [M_1(z)B + \textstyle\int_0^zP_{I,1}(z,\zeta)H_1(\zeta)\d\zeta]_{i \leq j}.
\end{equation}
Therein, $[H_1(z)]_{i \leq j} = 0$ and $[A_1(z)]_{i \leq j} = 0$ were used, which is implied by \eqref{a01def} and \eqref{h01def}. By inserting \eqref{Pdef} in \eqref{comu} the additional BC
\begin{equation}\label{combedp1}
[P_{I,1}(z,z)]_{i < j} = 0
\end{equation}
is obtained for $p > 1$. Hence, the BC to be fulfilled by $P_{I,1}(z,\zeta)$ are \eqref{PBCres} and \eqref{combedp1}. Since this is only a requirement for the upper triangular part of $P_{I,1}(z,\zeta)$ and the right hand side of the kernel PDE \eqref{couplpde} is equal to zero,  it is sufficient to consider $P_{I,1}(z,\zeta)$ in the special form \eqref{P1def}.

In order to determine the solution of the PDE \eqref{couplpde} define
\begin{equation}\label{sigmadef}
\sigma_{ij}(z,\zeta) = \phi_i^{-1}(\phi_i(z) - \phi_j(\zeta))
\end{equation}
for $i,j = 1,2,\ldots,p$, $i \leq j$, in view of \eqref{phidef}. Note that $\phi_i^{-1}$, $i = 1,2,\ldots,p$, exists, because $\phi_i$ in \eqref{phidef} is strictly monotonically increasing. Furthermore, the property
\begin{equation}\label{sigma0}
\sigma_{ij}(z,0) = z
\end{equation}
implied by \eqref{phidef} and \eqref{sigmadef} is utilized throughout the following. Then, by making use of the \emph{method of characteristics} the (generalized) solution \eqref{Pdef} with \eqref{P1def} of the PDE \eqref{couplpde} satisfying \eqref{combedp1} is given by
\begin{subequations}\label{solp2}
	\begin{align}
	p_{I,ii}(z,\zeta) &= \textstyle\frac{1}{\lambda_i(\zeta)}f_{ii}(\sigma_{ii}(z,\zeta))\label{diagres}\\
	p_{I,ij}(z,\zeta) &= \begin{cases}
	0, & \phi_j(\zeta) > \phi_i(z)\\
	\textstyle\frac{1}{\lambda_j(\zeta)}f_{ij}(\sigma_{ij}(z,\zeta)), & \phi_j(\zeta) \leq \phi_i(z)
	\end{cases}\label{diagres2}
	\end{align}	
\end{subequations}
for $i < j$ and $i,j = 1,2,\ldots,p$ and some at least piecewise continuous functions $f_{ij}$.

As will be shown in the sequel, the condition \eqref{PBCresh} leads to Volterra integral equations for the elements $h_{ij}(z)$, $i > j$, of $H_1(z)$ in \eqref{h01def}. Thereby, the number of integral equations increases with $p$. Therefore, at first the cases $p = 1$ and $p = 2$ are considered, as they frequently appear in applications. Subsequently, the solution of the general case $p \in \mathbb{N}$ is presented.

\subsubsection{Systems of $m+1$ coupled transport PDEs ($p = 1$)}\label{sec:m+1}
In this case $A_1(z) = 0$ and $H_1(z) = 0$ has to be taken into account. Furthermore,  let $B = b \in \mathbb{R}^{n_{\xi}}$. The only BC to be fulfilled by \eqref{solp2} is
\begin{equation}
p_{I,11}(z,0)\lambda_1(0) = M_1(z)b
\end{equation}
in view of \eqref{Nidef}, \eqref{PBCres} and $(M_1(z))^T \in \mathbb{R}^{n_{\xi}}$.
Hence, with \eqref{sigma0} and \eqref{diagres} the solution $P_I(z,\zeta)$ of the BVP \eqref{decouplBVP} is \eqref{Pdef} and
\begin{align}\label{Psol}
P_{I,1}(z,\zeta) = p_{I,11}(z,\zeta) = \textstyle\frac{1}{\lambda_1(\zeta)}M_1(\sigma_{11}(z,\zeta))b.
\end{align}
In order to determine $P(1,\zeta)$ (see \eqref{gains}) insert \eqref{Pdef} and \eqref{Psol} in \eqref{recrelp1z} to obtain the \emph{Volterra integral equation of the second kind}
\begin{equation}\label{VIGL}
p_{11}(1,\zeta) + \textstyle\int_{\zeta}^{1}p_{11}(1,\zeta')p_{I,11}(\zeta',\zeta)\d\zeta' = p_{I,11}(1,\zeta)
\end{equation}
for $p_{11}(1,\zeta)$ in
\begin{equation}\label{Porigsol}
P(1,\zeta) = \begin{bmatrix}
p_{11}(1,\zeta) & 0^T \\
0 & 0
\end{bmatrix}.              
\end{equation}
Thereby, the particular form \eqref{Porigsol} of $P(1,\zeta)$ can be easily inferred from \eqref{recrelp1z}. As the kernel $p_{I,11}(\zeta',\zeta)$ and $p_{I,11}(1,\zeta)$ in \eqref{VIGL} are continuous functions, there exists a unique continuous solution $p_{11}(1,\zeta)$ of \eqref{VIGL} (see, e.\:g., \cite[Th. 3.1]{Lin85}). The latter can be readily obtained from a \emph{successive approximation}. With this and the result of Section \ref{sec:N_isolve} the feedback gains follow from \eqref{gains}.

\subsubsection{Systems of $m+2$ coupled transport PDEs ($p = 2$)}\label{sec:p2}
In view \eqref{a01def} and \eqref{h01def} one obtains from \eqref{PBCresh} the result
\begin{equation}\label{h21}
h_{21}(z) + \textstyle\int_{0}^{z}p_{I,22}(z,\zeta)h_{21}(\zeta)\d\zeta = -[M_1(z)B]_{21} + a_{21}(z)
\end{equation}
(for the related notation see Section \ref{sec:P_isolve}). Moreover, evaluating \eqref{PBCres} gives
\begin{subequations}
	\begin{align}
	p_{I,i2}(z,0)\lambda_2(0) &= [M_1(z)B]_{i2}, \quad i = 1,2\label{rbsp}\\
	p_{I,11}(z,0)\lambda_1(0)  &= [M_1(z)B]_{11} + \textstyle\int_0^zp_{I,12}(z,\zeta)h_{21}(\zeta)\d\zeta.\label{intrb}
	\end{align}
\end{subequations}
By utilizing \eqref{sigma0}, \eqref{solp2} and \eqref{rbsp} it follows that
\begin{subequations}\label{solp2p}
	\begin{align}
	p_{I,12}(z,\zeta) &= \begin{cases}
	0, & \phi_2(\zeta) > \phi_1(z)\\
	\textstyle\frac{1}{\lambda_2(\zeta)}[M_1(\sigma_{12}(z,\zeta))B]_{12}, & \phi_2(\zeta) \leq \phi_1(z)
	\end{cases}\label{diagres2p}\\
	p_{I,22}(z,\zeta) &= \textstyle\frac{1}{\lambda_2(\zeta)}[M_1(\sigma_{22}(z,\zeta))B]_{22}.\label{diagresp}
	\end{align}	
\end{subequations}
With this, \eqref{h21} becomes a Volterra integral equation of the second kind for $h_{21}(z)$. Since $p_{I,11}(z,\zeta)$ is continuous and $a_{21}(z)$ is piecewise continuous (see Appendix \ref{appA}) the integral equation \eqref{h21} has a unique piecewise continuous solution (see, e.\:g., \cite[Th. 3.2]{Lin85}). Then, by utilizing the resulting $h_{21}(z)$ in \eqref{intrb} the BC for $p_{I,11}(z,\zeta)$ is known. Hence, \eqref{diagres} yields
\begin{equation}
p_{I,11}(z,\zeta)  = \textstyle\frac{1}{\lambda_1(\zeta)}([M_1(\sigma_{11}(z,\zeta))B]_{11}
+ \textstyle\int_0^{\sigma_{11}(z,\zeta)}p_{I,12}(\sigma_{11}(z,\zeta),\zeta')h_{21}(\zeta')\d\zeta').
\end{equation}
Finally, with $P_{I}(z,\zeta)$ being determined the matrix
\begin{equation}\label{P1def0}
P(1,\zeta) = \begin{bmatrix}
P_{1}(1,\zeta) &  0\\
0               & 0
\end{bmatrix}
\end{equation}
with
\begin{equation}
P_1(1,\zeta) = \begin{bmatrix}
p_{11}(1,\zeta) &  p_{12}(1,\zeta)\\
0               &  p_{22}(1,\zeta) 
\end{bmatrix}
\end{equation}
follows from solving \eqref{recrelp1z}, which completes the state feedback design.

\subsubsection{General heterodirectional systems ($p \in \mathbb{N}$)}
In what follows the approach of the last paragraph is extended to the general case $p \in \mathbb{N}$. From \eqref{PBCres} one obtains the result
\begin{equation}
p_{I,lp}(z,0)\lambda_{p}(0) = [M_1(z)B]_{lp}, \quad l = 1,2,\ldots,p,
\end{equation}
because the last column of $P_{I,1}(z,\zeta)H_1(\zeta)$ is vanishing. In order to determine the remaining $p_{I,lj}(z,0)$, $i = 1,2,\ldots,p$, $j = 1,2,\ldots,p-1$, observe that the elements of the integrands in \eqref{PBCresh} and \eqref{PBCres} can be represented by
\begin{equation}\label{sumrule}
[P_{I,1}(z,\zeta)H_1(\zeta)]_{ij} = \!\sum_{k = \max(i,j+1)}^{p}p_{I,ik}(z,\zeta)h_{kj}(\zeta)
\end{equation}
for $i = 1,2,\ldots,p$, $j = 1,2,\ldots,p-1$. This is implied by the upper and lower triangular form of $P_{I,1}(z,\zeta)$ and $H_1(\zeta)$ (see \eqref{P1def} and \eqref{h01def}).  With these BC the kernel elements $p_{I,lp}(z,\zeta)$, $l = 1,2,\ldots,p$, follow from \eqref{solp2}. In the sequel the remaining elements  $p_{I,lj}(z,\zeta)$, $j = p-1,p-2,\ldots,l$, in the $l$-th line of $P_{I,1}(z,\zeta)$ are determined (see the triangular form in \eqref{P1def}). Thereby, it is assumed that $l$ is decreasing starting from $p-1$, i.\:e., $l = p-1,p-2,\ldots,1$. Consider the BC \eqref{PBCres} and apply \eqref{sumrule} with $i = l \leq j$ to get
\begin{equation}
p_{I,lj}(z,0)\lambda_j(0) = [M_1(z)B]_{lj} + \sum_{k = j+1}^{p}\textstyle\int_0^zp_{I,lk}(z,\zeta)h_{kj}(\zeta)\d\zeta,
\end{equation}
which is successively evaluated for $j = p-1,p-2,\ldots,l$. Note that the function $h_{kj}(\zeta)$, $k = j+1,\ldots,p$, have been already computed and thus are known. With this, the BC $p_{I,lj}(z,0)$ can be  obtained so that the remaining elements $p_{I,lj}(z,\zeta)$, $j =  l,l+1, \ldots, p-2,p-1$, are determined by \eqref{solp2}. By making use of this result the BC \eqref{PBCresh} and \eqref{sumrule} with $i = l > j$ leads to the Volterra integral equations
\begin{equation}\label{hvolp}
h_{lj}(z) + {\textstyle\int_0^z}p_{I,ll}(z,\zeta)h_{lj}(\zeta)\d\zeta
 = - \sum\limits_{k = l+1}^{p}\textstyle\int_0^zp_{I,lk}(z,\zeta)h_{kj}(\zeta)\d\zeta -[M_1(z)B]_{lj} + a_{lj}(z)
\end{equation}
for $h_{lj}(z)$ in the $l$-th line of $H_1(\zeta)$ with $j = 1,2,\ldots,l-1$ (see the triangular form in \eqref{h01def}). Therein, the integrand  $p_{I,lk}(z,\zeta)h_{kj}(\zeta)$, $k = l+1,\ldots,p$, $j = 1,2,\ldots,l-1$, appearing in the second line of \eqref{hvolp} has been computed in the previous steps. Hence, the right hand side of \eqref{hvolp} is known. By utilizing this procedure line by line the matrix $P_{I,1}(z,\zeta)$ and thus \eqref{Pdef} can be found. Then, the feedback gains \eqref{gains} follow from solving \eqref{recrelp1z} by making use of \eqref{P1def0} and utilizing a matrix $P_{1}(z,\zeta)$ with the same upper triangular form as in \eqref{P1def}. 
The preceding derivations show that $p^2$ Volterra integral equations have to be solved for determining the inverse decoupling transformation \eqref{ccord2}.

The next theorem summarizes the results of this section.
\begin{theorem}[Inverse decoupling equations]
	The inverse decoupling equations \eqref	{decouplODE} and \eqref{decouplBVP} have a solution for $p \in \mathbb{N}$ with the elements of $N_I(z)$ and  $P_I(z,\zeta)$ being piecewise $C^1$.	 
\end{theorem}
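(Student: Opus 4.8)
The plan is to prove the theorem by following precisely the constructive derivation carried out in Sections \ref{sec:N_isolve} and \ref{sec:P_isolve}, organizing it into two independent parts that reflect the fact that the IVP \eqref{decouplODE} decouples from the BVP \eqref{decouplBVP}. First I would handle $N_I(z)$: since the matrix $G(z)$ in \eqref{Gdef} is piecewise continuous (the kernel $K(z,\zeta)$ is only piecewise $C^1$, see Appendix \ref{appA}), rewriting \eqref{decouplODE} as the two linear matrix IVP \eqref{ivpN1}, \eqref{ivpN2} and applying the variation-of-constants formula with the fundamental matrices $\Psi_i(z,\zeta)$ from \eqref{fmatdef} yields explicit formulas for $M_1(z)$ and $M_2(z)$, hence for $N_I(z) = E_1M_1(z)+E_2M_2(z)$. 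One checks directly that the boundary conditions \eqref{Kbed1} and \eqref{Kbed} are built into the initial data $M_1(0)=-K$ and $M_2(0)=C_2-Q_0K$, and that the resulting $N_I$ is a piecewise $C^1$ function because its derivative is a product of the (smooth) exponentials with the piecewise continuous data $G$ and $A_0$.

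Second I would construct $P_I(z,\zeta)$. The key reduction, already established in Section \ref{sec:P_isolve}, is that it suffices to seek $P_I$ in the block form \eqref{Pdef} with $P_{I,1}(z,\zeta)$ upper triangular as in \eqref{P1def}: the transport PDE \eqref{couplpde} has zero right-hand side, its diagonal-commutator boundary condition \eqref{comu} reduces to \eqref{combedp1}, and the boundary condition \eqref{BCcomp} splits via $E_1E_1^T+E_2E_2^T=I_n$ into \eqref{bcsplit1}–\eqref{bcsplit2}; equation \eqref{bcsplit2} is satisfied by the choice \eqref{bch2} for $H_2(z)$, while equation \eqref{bcsplit1} fixes the boundary data $P_{I,1}(z,0)$ on the upper triangle \eqref{PBCres} and simultaneously determines the strictly lower triangular entries $h_{ij}(z)$ of $H_1(z)$ through \eqref{PBCresh}. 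The method of characteristics then gives the explicit representation \eqref{solp2} of the kernel entries in terms of the shift maps $\sigma_{ij}$ of \eqref{sigmadef}, valid because each $\phi_i$ in \eqref{phidef} is strictly increasing and hence invertible. The remaining point is to close the loop: substituting \eqref{solp2} into \eqref{PBCresh}, the sum rule \eqref{sumrule} turns the conditions on the $l$-th row of $H_1$ into the Volterra integral equations of the second kind \eqref{hvolp}, whose kernels $p_{I,ll}(z,\zeta)$ are continuous and whose right-hand sides are piecewise continuous (by the induction hypothesis that rows $l+1,\dots,p$ are already known, together with the piecewise continuity of $a_{lj}$ and of $M_1B$). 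By the standard existence theory for Volterra equations of the second kind with (piecewise) continuous data \cite[Th. 3.1, 3.2]{Lin85}, each such equation has a unique (piecewise) continuous solution obtainable by successive approximation.

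The argument is therefore an induction on the row index $l$ running downward from $p$ to $1$: in step $l$ one first reads off the already-known entries $h_{kj}$ with $k>l$, uses \eqref{PBCres} and \eqref{sumrule} to compute the boundary values $P_{I,lj}(z,0)$ for $j\ge l$, then \eqref{solp2} to obtain the kernel entries $p_{I,lj}(z,\zeta)$, and finally solves the Volterra equations \eqref{hvolp} for the new entries $h_{lj}$, $j<l$. After $p$ steps the whole kernel $P_{I,1}(z,\zeta)$, and with it $P_I(z,\zeta)$ in the form \eqref{Pdef}, is determined; the cases $p=1$ and $p=2$ treated in Sections \ref{sec:m+1} and \ref{sec:p2} serve as the base and as an illustration of the general recursion. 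Piecewise $C^1$-regularity of $P_I$ follows from formula \eqref{solp2}: the entries are compositions of the $C^1$ maps $\sigma_{ij}$ with the functions $f_{ij}$, which are in turn expressed through the piecewise $C^1$ data $N_I$, $A_0$ and the $h_{ij}$, the latter being piecewise continuous and, via their defining integral equations, actually piecewise $C^1$.

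The main obstacle is the bookkeeping in the inductive step: one must verify that the sum rule \eqref{sumrule} correctly captures the support conditions in \eqref{diagres2}–\eqref{solp2} (entries vanish where $\phi_j(\zeta)>\phi_i(z)$), so that when \eqref{hvolp} is assembled the right-hand side genuinely involves only kernel entries and lower-row $h$'s already constructed, and that no circular dependence slips in. Equivalently, one has to be careful that the ordering ``columns right-to-left within a fixed row, rows bottom-to-top'' makes every quantity on the right-hand side of \eqref{PBCres} and \eqref{hvolp} available at the moment it is needed; once this ordering is fixed the rest is the routine Volterra existence argument and the routine propagation of piecewise $C^1$-regularity through the explicit formulas.
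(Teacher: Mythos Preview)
Your proposal is correct and follows essentially the same constructive route as the paper: first solve the decoupled IVP \eqref{decouplODE} for $N_I$ via the variation-of-constants formulas with the fundamental matrices \eqref{fmatdef}, then reduce the BVP \eqref{decouplBVP} to the upper triangular block form \eqref{Pdef}--\eqref{P1def}, use the method of characteristics to obtain \eqref{solp2}, and close the construction by the downward row-wise induction that produces the Volterra equations \eqref{hvolp} for the entries of $H_1$. The regularity and well-posedness arguments you outline (piecewise $C^1$ from the explicit formulas, existence for the Volterra equations via \cite[Th. 3.1, 3.2]{Lin85}) are exactly those invoked in the paper, so nothing further is needed.
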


\section{Observer design}\label{sec:obs}
Consider the \emph{observer}
\begin{subequations}\label{obs}
	\begin{align}
	\partial_t\hat{x}(z,t) &= \Lambda(z)\partial_z\hat{x}(z,t) + A(z)\hat{x}(z,t) + C_1(z)\hat{\xi}(t) + L(z)(y(t) - \hat{x}_1(0,t))\label{xeqo}\\
	\hat{x}_2(0,t) &= Q_0y(t) + C_2\hat{\xi}(t)\label{uabco}\\
	\hat{x}_1(1,t) &= Q_1\hat{x}_2(1,t) + u(t)\label{aktbco}\\
	\dot{\hat{\xi}}(t) &= F\hat{\xi}(t) + By(t) + L_{\xi}(y(t) - \hat{x}_1(0,t))\label{plantodeo}
	\end{align}
\end{subequations}
for \eqref{plant} with \eqref{xeqo} defined on $(z,t) \in (0,1) \times \mathbb{R}^+$ and \eqref{uabco}--\eqref{plantodeo} on $t \in \mathbb{R}^+$. Thereby, $L \in (L_2(0,1))^{n \times p}$ and $L_{\xi} \in \mathbb{R}^{n_{\xi}\times p}$ are the \emph{observer gains} as well as $\hat{x}(z,0) = \hat{x}_0(z) \in \mathbb{R}^{n}$, $z \in [0,1]$, and $\hat{\xi}(0) = \hat{\xi}_0 \in \mathbb{R}^{n_{\xi}}$ are the observer IC. 

Introduce the \emph{error states} $\varepsilon_x = x - \hat{x}$ with $\varepsilon_{x_i} = E_i^T\varepsilon$, $i = 1,2$, and $\varepsilon_{\xi} = \xi - \hat{\xi}$ to obtain the \emph{observer error dynamics}
\begin{subequations}\label{obse}
	\begin{align}
	\partial_t\varepsilon_x(z,t) &= \Lambda(z)\partial_z\varepsilon_x(z,t) + A(z)\varepsilon_x(z,t) + C_1(z)\varepsilon_{\xi}(t) - L(z)\varepsilon_{x_1}(0,t)\label{xeqoe}\\
	\varepsilon_{x_2}(0,t) &= C_2\varepsilon_{\xi}(t)\label{uabcoe}\\
	\varepsilon_{x_1}(1,t) &= Q_1\varepsilon_{x_2}(1,t)\label{aktbcoe}\\
	\dot{\varepsilon}_{\xi}(t) &= F\varepsilon_{\xi}(t) - L_{\xi}\varepsilon_{x_1}(0,t).\label{plantodeoe}
	\end{align}
\end{subequations}
The corresponding design procedure follows similar lines as the computation of the state feedback controller in Section \ref{sec:statefeed}, which is inspired by the corresponding results in \cite{Deu16c}. In the first step a backstepping transformation maps the PDE subsystem \eqref{xeqoe}--\eqref{aktbcoe} into a target system of simpler structure. Then, the \emph{decoupling equations} to be solved for obtaining a \emph{PDE-ODE cascade} in the second step become explicitly solvable. 

Since the resulting observer gains depend on the inverse transformation, the invertible \emph{backstepping coordinates}
\begin{equation}\label{obstrafo}
\varepsilon_x(z,t) = \tilde{\varepsilon}_x(z,t) - \textstyle\int_0^zR_I(z,\zeta)\tilde{\varepsilon}_x(\zeta,t)\d\zeta
= \mathcal{T}_o^{-1}[\tilde{\varepsilon}_x(t)](z)
\end{equation}
are introduced for \eqref{xeqoe}--\eqref{aktbcoe}. The \emph{integral kernel} $R_I(z,\zeta) \in \mathbb{R}^{n \times n}$ in \eqref{obstrafo} is the solution of the \emph{kernel equations}
\begin{subequations}\label{obsbvp}
	\begin{align}
	\Lambda(z)\partial_zR_I(z,\zeta) + \partial_{\zeta}(R_I(z,\zeta)\Lambda(\zeta)) &= -A(z)R_I(z,\zeta), \quad 0 < \zeta < z < 1\label{ocdbvp1}\\
	(E_1^T - Q_1E_2^T)R_I(1,\zeta) &= -S(\zeta)\label{obsbvpbc}\\
	\Lambda(z) R_I(z,z) - R_I(z,z)\Lambda(z) &= A(z).\label{ocdbvp2}
	\end{align}
\end{subequations}
In \eqref{obsbvpbc} the matrix $S(\zeta) =\begin{bmatrix}S_{1}(\zeta) & S_{2}(\zeta)\end{bmatrix}$ consists of the strictly upper triangular matrix 
\begin{equation}\label{h01defs}
S_{1}(\zeta) = \begin{bmatrix} 0    & s_{12}(\zeta) & \ldots  & s_{1p}(\zeta)\\
\vdots  & \ddots        & \ddots  & \vdots\\
\vdots  & \ddots        & \ddots  & s_{p-1 \,p}(\zeta)\\
0       & \ldots        & \ldots  & 0
\end{bmatrix} \in \mathbb{R}^{p \times p}
\end{equation}
and of $S_{2}(\zeta) \in \mathbb{R}^{p \times m}$, which has no special form. In Appendix \ref{appB} it is shown that \eqref{obsbvp} can be traced back to kernel equations analysed in \cite{Hu15b}. From this, the existence of a unique piecewise $C^1$-solution $R_I(z,\zeta)$ w.r.t. \eqref{obsbvp} as well as the existence of the corresponding Volterra-type integral transformation $\mathcal{T}_o[\cdot]$ can be inferred (see \eqref{obstrafo}). In particular,
\begin{equation}\label{invtrafo}
\tilde{\varepsilon}_x(z,t) = \varepsilon_x(z,t) + \textstyle\int_0^zR(z,\zeta)\varepsilon_x(\zeta,t)\d\zeta
= \mathcal{T}_o[\varepsilon_x(t)](z)
\end{equation}
holds with the integral kernel $R(z,\zeta) \in \mathbb{R}^{n \times n}$. 
By inserting \eqref{obstrafo} in \eqref{invtrafo} and changing the order of integration the \emph{reciprocity relation}
\begin{equation}\label{recrelcontrolobs}
R(z,\zeta) - \textstyle\int_{\zeta}^{z}R(z,\zeta')R_I(\zeta',\zeta)\d\zeta' = R_I(z,\zeta)
\end{equation}
of the kernels $R(z,\zeta)$ and $R_I(z,\zeta)$ is readily obtained. It can be utilized to obtain the kernel $R(z,\zeta)$ from a known $R_I(z,\zeta)$. Furthermore, the result
\begin{equation}\label{kerntrafo}
 \mathcal{T}_o[R_I(\cdot,0)] = R_I(z,0) + \textstyle\int_{0}^{z}R(z,\zeta')R_I(\zeta',0)\d\zeta' = R(z,0)
\end{equation}
follows by setting $\zeta = 0$ in \eqref{recrelcontrolobs}. After solving \eqref{obsbvp} the elements of the strictly upper triangular part in \eqref{h01defs} and the elements of $S_2(\zeta)$ are determined by the resulting kernel. 

Utilizing the usual calculations, $\tilde{\varepsilon}_{x_i} = E_i^T\tilde{\varepsilon}_x$, $i = 1,2$, and \eqref{kerntrafo} it is readily found that \eqref{obstrafo} transforms the error system \eqref{obse} into
\begin{subequations}\label{obserrsys2}
	\begin{align}
	\partial_t\tilde{\varepsilon}_x(z,t) &= \Lambda(z)\partial_z\tilde{\varepsilon}_x(z,t)  + G_o(z)\varepsilon_{\xi}(t) -\widetilde{L}(z)\tilde{\varepsilon}_{x_1}(0,t)\label{opdes1}\\	                       
	\tilde{\varepsilon}_{x_2}(0,t) &= C_2\varepsilon_{\xi}(t)\label{obsrb2e2}\\
	\tilde{\varepsilon}_{x_1}(1,t) &= Q_1\tilde{e}_{x_2}(1,t) - \textstyle\int_0^1S(\zeta)\tilde{\varepsilon}_x(\zeta,t)\d\zeta\label{dobsue2}\\
	\dot{\varepsilon}_{\xi}(t) &= F\varepsilon_{\xi}(t) - L_{\xi}\tilde{\varepsilon}_{x_1}(0,t). \label{obsode2}
	\end{align}
\end{subequations}
Therein, $G_o(z) = \mathcal{T}_o[C_1](z) - R(z,0)\Lambda(0)E_2C_2$ and the \emph{auxiliary observer gain}
\begin{equation}\label{obsgain}
\widetilde{L}(z) = \mathcal{T}_o[L](z) + \mathcal{T}_o[R_I(\cdot,0)](z)\Lambda(0)E_1
\end{equation}
were used. In the second step the PDE target system \eqref{opdes1}--\eqref{dobsue2} is decoupled from the ODE subsystem \eqref{obsode2} by introducing the new coordinates
\begin{equation}\label{newcoordODE}
\vartheta(z,t) = \tilde{\varepsilon}_x(z,t) - \Gamma(z)\varepsilon_{\xi}(t)
\end{equation}
with $\Gamma(z) \in \mathbb{R}^{n \times n_{\xi}}$. Take the time derivative of \eqref{newcoordODE}, use $\vartheta_{i} = E_i^T\vartheta$, $i = 1,2$, and insert \eqref{obserrsys2}. Then, the \emph{ODE-PDE cascade}
\begin{subequations}\label{obserrsys3}
	\begin{align}
	\partial_t\vartheta(z,t) &= \Lambda(z)\partial_z\vartheta(z,t),&& (z,t) \in (0,1)\times\mathbb{R}^+ \label{opdes13}\\	                       
	\vartheta_{2}(0,t) &= 0, && t > 0\label{dobsue23}\\
	\vartheta_{1}(1,t) &= Q_1\vartheta_{2}(1,t) - \textstyle\int_0^1S(\zeta)\vartheta(\zeta,t)\d\zeta, && t > 0\label{obsrb2e23}\\
	\dot{\varepsilon}_{\xi}(t) &= (F - L_{\xi}E_1^{T}\Gamma(0))\varepsilon_{\xi}(t) - L_{\xi}\vartheta_1(0,t), && t > 0\label{odetildobsfin}
	\end{align}
\end{subequations}
results, if $\Gamma(z)$ is the solution of the \emph{decoupling equations}
\begin{subequations}\label{odepdebvp}
	\begin{align}
	\Lambda(z) \d_z\Gamma(z) &= \Gamma(z)F - G_o(z),\quad z \in (0,1)\label{pdeobsdecpl}\\
	E_2^T\Gamma(0) &= C_2\\
	(E_1^T - Q_1E_2^T)\Gamma(1) &=  -\textstyle\int_0^1S(\zeta)\Gamma(\zeta)\d\zeta
	\end{align}
\end{subequations}
and 
\begin{equation}\label{Ltilcomp}
 \widetilde{L}(z) = \Gamma(z)L_{\xi}
\end{equation}
holds. The next theorem clarifies the solvability of \eqref{odepdebvp}.
\begin{theorem}[ODE-PDE cascade]\label{lem:odepdecasc}
The BVP \eqref{odepdebvp} has a unique solution $\Gamma(z) = [\Gamma_{ij}(z)] \in \mathbb{R}^{n \times n_{\xi}}$ with $\Gamma_{ij}$ piecewise $C^1$-functions, $i = 1,2,\ldots,n$, $j = 1,2,\ldots,n_{\xi}$.
\end{theorem}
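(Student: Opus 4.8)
The plan is to treat \eqref{odepdebvp} exactly as the inverse decoupling IVP in Section~\ref{sec:N_isolve}: in view of the diagonal form of $\Lambda(z)$, \eqref{pdeobsdecpl} is a decoupled system of first-order linear ODEs for the rows of $\Gamma$, and the only genuinely new feature is the nonlocal terminal condition in \eqref{odepdebvp}, which is rendered harmless by the strictly upper triangular structure of $S_1(\zeta)$ in \eqref{h01defs}.

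First I would premultiply \eqref{pdeobsdecpl} by $\Lambda^{-1}(z)$ and by $I_n = E_1E_1^T + E_2E_2^T$ and use $E_i^T\Lambda^{-1}(z) = \Lambda_i^{-1}(z)E_i^T$ (notation as in Section~\ref{sec:N_isolve}) to split it into two decoupled ODEs for $\Gamma_1(z) = E_1^T\Gamma(z) \in \mathbb{R}^{p\times n_\xi}$ and $\Gamma_2(z) = E_2^T\Gamma(z) \in \mathbb{R}^{m\times n_\xi}$ with (at least piecewise continuous) inhomogeneities $\Lambda_i^{-1}(z)E_i^TG_o(z)$, $G_o$ inheriting piecewise continuity from $C_1$ and from the piecewise $C^1$ kernel $R_I$ entering $\mathcal{T}_o$. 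The condition $E_2^T\Gamma(0)=C_2$ turns the $\Gamma_2$-equation into an initial value problem; by variation of constants with the transition matrices $\mathrm{e}^{F(\phi_i(z)-\phi_i(\zeta))}$, $i=p+1,\dots,n$ (cf.\ the solution of \eqref{ivpN2} and \eqref{phidef}), it has a unique piecewise $C^1$-solution $\Gamma_2(z)$, which in particular fixes $Q_1\Gamma_2(1)$ and $\int_0^1 S_2(\zeta)\Gamma_2(\zeta)\,\d\zeta$.

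Next I would solve for $\Gamma_1$ row by row. Writing $\gamma_i^T(z)=e_i^T\Gamma_1(z)$, $i=1,\dots,p$, variation of constants gives $\gamma_i^T(z)=\gamma_i^T(0)\,\mathrm{e}^{F\phi_i(z)}-\textstyle\int_0^z \lambda_i^{-1}(\zeta)\,e_i^TE_1^TG_o(\zeta)\,\mathrm{e}^{F(\phi_i(z)-\phi_i(\zeta))}\,\d\zeta$, which is affine in the unknown $\gamma_i^T(0)$. The remaining condition in \eqref{odepdebvp}, namely $\Gamma_1(1)=Q_1\Gamma_2(1)-\int_0^1 S_1(\zeta)\Gamma_1(\zeta)\,\d\zeta-\int_0^1 S_2(\zeta)\Gamma_2(\zeta)\,\d\zeta$, reads in row $i$
\[
\gamma_i^T(1)=e_i^T\Big(Q_1\Gamma_2(1)-\textstyle\int_0^1 S_2(\zeta)\Gamma_2(\zeta)\,\d\zeta\Big)-\textstyle\sum_{k=i+1}^{p}\int_0^1 s_{ik}(\zeta)\,\gamma_k^T(\zeta)\,\d\zeta,
\]
since $S_1(\zeta)$ is strictly upper triangular, so row $i$ involves only $\gamma_k^T$ with $k>i$. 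Solving these successively for $i=p,p-1,\dots,1$, at each step the right-hand side has already been determined, so $\gamma_i^T(1)$ is a known vector; since $\mathrm{e}^{F\phi_i(1)}$ is invertible this yields $\gamma_i^T(0)$, and hence $\gamma_i^T(z)$ on $[0,1]$, uniquely and piecewise $C^1$. Assembling $\Gamma(z)=E_1\Gamma_1(z)+E_2\Gamma_2(z)$ gives the claimed solution, and uniqueness is immediate because every step above is forced by the data.

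I expect the main obstacle to be conceptual rather than computational: one must observe that, despite the nonlocal terminal condition, no genuine Fredholm-type solvability question arises, precisely because the strictly upper triangular form of $S_1$ — which is exactly why $S_1$ was prescribed with this structure in \eqref{h01defs} — lets the $p$ terminal conditions be resolved in a cascade, reducing the BVP to explicitly solvable initial value problems. A minor point worth making explicit in the write-up is why the regularity is only ``piecewise $C^1$'': this is inherited from $C_1\in(L_2(0,1))^{n\times n_\xi}$ and from the piecewise $C^1$ kernel $R_I$, both of which enter $G_o$ (and $R_I$ also enters $S$).
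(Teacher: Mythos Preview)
Your proposal is correct and follows essentially the same route as the paper: split $\Gamma$ into $E_1^T\Gamma$ and $E_2^T\Gamma$, solve the latter as an IVP with initial value $C_2$, express the rows of the former via variation of constants with the transition matrices $\mathrm{e}^{F\phi_i(\cdot)}$, and then use the strictly upper triangular form of $S_1$ to resolve the nonlocal terminal condition. The only cosmetic difference is that the paper assembles the resulting conditions on $\gamma_1^T(0),\dots,\gamma_p^T(0)$ into a single linear system with a block-triangular coefficient matrix $M$ and checks $\det M=\prod_i\det\mathrm{e}^{F\phi_i(1)}\neq 0$, whereas you do the equivalent back-substitution row by row for $i=p,p-1,\dots,1$.
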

\begin{proof}
With $E_1E_1^T + E_2E_2^T = I_n$ (see \eqref{Edef}) and making use of the same approach as in Section \ref{sec:N_isolve} the BVP \eqref{odepdebvp} can be written as
\begin{subequations}\label{ivpN1e}
	\begin{align}
	\d_zE_1^T\Gamma(z)  &= \Lambda^{-1}_1(z)(E_1^T\Gamma(z)F - E_1^TG_o(z)), \quad z \in (0,1)\label{N1odee}\\
	E_1^T\Gamma(1) &= Q_1E_2^T\Gamma(1) - \textstyle\int_0^1S(\zeta)\Gamma(\zeta)\d\zeta
	\end{align}
\end{subequations}
and 
\begin{subequations}\label{ivpN2e}
	\begin{align}
	\d_zE_2^T\Gamma(z) &= \Lambda^{-1}_2(z)(E_2^T\Gamma(z)F - E_2^TG_o(z)), \quad z \in (0,1)\label{N2odee}\\
	E_2^T\Gamma(0) &= C_2.\label{N2bce}
	\end{align}
\end{subequations}
By introducing $\mu_i^T(z) = e_i^TE_2^T\Gamma(z)$, $i = 1,2,\ldots,m$, one obtains with $g_i^T(z) = -\frac{1}{\lambda_{i+p}(z)}e_i^TE_2^TG_o(z)$ and $c_i^T = e_i^TC$ from \eqref{ivpN2e}
\begin{subequations}\label{ivpN2setme}
	\begin{align}
	\d_z\mu_i^T(z) &= \textstyle\frac{1}{\lambda_{i+p}(z)}\mu_i^T(z)F\label{N2odeme}
	+ g_i^T(z), \quad z \in (0,1)\\
	\mu_i^T(0) &= c_i^T.\label{N2bcme}
	\end{align}
\end{subequations}
Observe that 
\begin{equation}\label{fmatdefe}
\Psi_i(z,\zeta) = \text{e}^{F(\phi_i(z) - \phi_i(\zeta))}, \quad i = 1,2,\ldots,n
\end{equation}
with $\phi_i$ defined in \eqref{phidef} is the solution of the IVP
\begin{equation}\label{psiivp}
\partial_z\Psi_i(z,\zeta) = \textstyle\frac{1}{\lambda_i(z)}F\Psi_i(z,\zeta), \quad \Psi_i(\zeta,\zeta) = I.
\end{equation}
Hence, $\Psi_{i+p}(z,\zeta)$, $i = 1,2,\ldots,m$, is the \emph{fundamental matrix} related to IVP \eqref{ivpN2setme} so that the corresponding solution reads
\begin{equation}\label{musolobs}
\mu_i^T(z) = c_i^T\Psi_{i+p}(z,0)
+ \textstyle\int_0^zg_i^T(\zeta)\Psi_{i+p}(z,\zeta)\d\zeta.
\end{equation}
Correspondingly, with $\nu_i^T(z) = e_i^TE_1^T\Gamma(z)$, $i = 1,2,\ldots,p$, $h_i^T(z) = -\frac{1}{\lambda_{i}(z)}e_i^TE_1^TG_o(z)$ and $q_i^T = e_i^TQ_1E_2^T\Gamma(1))$ the BVP \eqref{ivpN1e} becomes
\begin{subequations}\label{ivpN2setmee}
	\begin{align}
	\d_z\nu_i^T(z) &= \textstyle\frac{1}{\lambda_i(z)}\nu_i^T(z)F\label{N2odemee}
	+ h_i^T(z), \quad z \in (0,1)\\
	\nu_i^T(1) &= q_i^T - \textstyle\int_0^1e_i^TS(\zeta)(E_1E_1^T\Gamma(\zeta) + E_2E_2^T\Gamma(\zeta))\d\zeta.\label{N2bcmee}
	\end{align}
\end{subequations}
By inserting the solution 
\begin{equation}\label{solcomp}
 E_1^T\Gamma(z) = N_0(z) + H(z)
\end{equation}
of \eqref{N2odemee} in \eqref{N2bcmee}, where
\begin{equation}
N_0(z) = \begin{bmatrix}
\nu_1^T(0)\Psi_1(z,0)\\
\vdots\\
\nu_p^T(0)\Psi_p(z,0)
\end{bmatrix} \quad \text{and} \quad
H(z) =  \int_0^z\begin{bmatrix}h_1^T(\zeta)\Psi_1(z,\zeta)\\
\vdots\\
h_p^T(\zeta)\Psi_p(z,\zeta)
\end{bmatrix}\d\zeta
\end{equation}
as well as taking the partition of $S(\zeta)$ into account the result
\begin{equation}\label{inidet}
 \nu_i^T(0)\Psi_i(1,0) + \textstyle\int_0^1e_i^TS_1(\zeta)N_0(\zeta)\d\zeta = r_i^T, \quad i = 1,2,\ldots,p
\end{equation}
with
\begin{equation}
 r_i^T = q_i^T - \textstyle\int_0^1(e_i^T(S_1(\zeta)H(\zeta) + S_2(\zeta)E_2^T\Gamma(\zeta))- h_i^T(\zeta)\Psi_{i}(1,\zeta))\d\zeta
\end{equation}
follows. In view of \eqref{h01defs} the relation \eqref{inidet} can be rewritten as
\begin{multline}\label{prefindecoul}
 \begin{bmatrix}
  \nu_1^T(0)\Psi_1(1,0)\\
  \vdots\\
  \nu_{p-1}^T(0)\Psi_{p-1}(1,0)\\
  \nu_p^T(0)\Psi_p(1,0)
 \end{bmatrix}
 +
 \int_0^1\begin{bmatrix}
  \nu_2^T(0)\Psi_2(\zeta,0)s_{12}(\zeta) + \ldots + \nu_p^T(0)\Psi_p(\zeta,0)s_{1p}(\zeta)\\
  \vdots\\
  \nu_p^T(0)\Psi_p(\zeta,0)s_{p-1\,p}(\zeta)\\
  0^T
 \end{bmatrix}\d\zeta\\
  = \begin{bmatrix}
  	 r_1^T\\
  	 \vdots\\
  	 r_p^T
  \end{bmatrix}.
\end{multline}
By introducing
\begin{equation}
 M_{ij} = \textstyle\int_0^1\Psi_j(\zeta,0)s_{ij}(\zeta)\d\zeta
\end{equation}
one obtains 
\begin{equation}\label{nu0solv}
 \begin{bmatrix}
  \nu_1^T(0) & \ldots & \nu_p^T(0)
 \end{bmatrix}M = \begin{bmatrix}
 r_1^T & \ldots & r_p^T
 \end{bmatrix}
\end{equation}
from \eqref{prefindecoul}, in which
\begin{equation}\label{Mdef}
 M = \begin{bmatrix}
  \Psi_1(1,0) & 0      & \ldots            & 0              & 0\\
  M_{12}      & \ddots & \ddots            & \vdots         & \vdots\\
  \vdots      & \ddots & \ddots            & 0              & 0\\
  M_{1\,p-1}  & \ldots &  M_{p-2\,p-1}     &\Psi_{p-1}(1,0) & 0\\
  M_{1p}	  & \ldots & M_{p-2\,p}        &M_{p-1\,p}      & \Psi_p(1,0)
 \end{bmatrix}
\end{equation}
holds. Hence,
\begin{equation}
 \det M = \prod_{i=1}^p\det\Psi_i(1,0) = \prod_{i=1}^p\det \text{e}^{F\phi_i(1)} 
        = \prod_{i=1}^p \text{e}^{\operatorname{tr}(F\phi_i(1))} \neq 0
\end{equation}
follows from \eqref{fmatdefe} and \eqref{Mdef}. Consequently, the initial values $\nu_i^T(0)$, $i = 1,2,\ldots,p$, determining the solution \eqref{solcomp}  of \eqref{ivpN2setmee} can be calculated uniquely from \eqref{nu0solv}. Together with \eqref{musolobs} this shows that there exists a unique solution of  \eqref{odepdebvp}, that is piecewise $C^1$ as $G_o(z)$ in \eqref{pdeobsdecpl} has this property (see the definition of $G_o(z)$ and the properties of $R(z,\zeta)$ derived in \cite{Deu16c}). 
\end{proof}

In \cite{Hu15b} it is shown that the infinite-dimensional subsystem \eqref{opdes13}--\eqref{obsrb2e23} is \emph{finite-time stable} for piecewise continuous IC, i.\:e.,
\begin{equation}\label{eq:to}
  \tilde{\varepsilon}_x(z,t) = 0, \quad t \geq t_o = \sum_{i=1}^{p+1}|\phi_i(1)|.
\end{equation}
Hence, if the ODE-subsystem \eqref{odetildobsfin} is asymptotically stable so is the ODE-PDE cascade \eqref{obserrsys3}. For this, the observer gain $L_{\xi}$ has to be such that  $F - L_{\xi}E_1^{T}\Gamma(0)$ is a Hurwitz matrix. The next theorem presents the corresponding observability condition for $(E_1^{T}\Gamma(0),F)$.
\begin{theorem}[Observability]\label{lem:obs}
Let $N_{\xi}(s)$ be the numerator of the transfer matrix $F_{\xi}(s)= D_{\xi}^{-1}(s)N_{\xi}(s)$ w.r.t. \eqref{xeq}--\eqref{aktbc} and \eqref{meas} from $\xi$ to $y$ and denote the linearly independent eigenvectors of $F$  w.r.t. the eigenvalues $\mu_{i}$, $i = 1,2,\ldots,\bar{n}_{\xi}$, $\bar{n}_{\xi} \leq n_{\xi}$, by $v_{i}$. Then, the pair $(E_1^T\Gamma(0),F)$ is observable iff 
	\begin{equation}\label{obsbed}
	N_{\xi}(\mu_{i})v_{i} \neq 0, \quad i = 1,2,\ldots,\bar{n}_{\xi}.
	\end{equation} 
\end{theorem}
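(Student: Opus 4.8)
The plan is to relate the observability of the pair $(E_1^T\Gamma(0),F)$ to the transfer behaviour of the PDE subsystem by exploiting the explicit structure of $\Gamma(z)$ established in Theorem \ref{lem:odepdecasc}. Recall that $\Gamma(z)$ solves \eqref{odepdebvp}, so in particular $E_1^T\Gamma(0)$ is built from the initial values $\nu_i^T(0)$ obtained in \eqref{nu0solv}. The first step is to apply the \emph{Popov--Belevitch--Hautus (PBH) test}: since $F$ is diagonalizable on the relevant invariant subspace, $(E_1^T\Gamma(0),F)$ is observable iff $E_1^T\Gamma(0)v_i \neq 0$ for every eigenvector $v_i$ of $F$ associated with $\mu_i$, $i = 1,\ldots,\bar n_\xi$. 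Thus the whole statement reduces to showing that $E_1^T\Gamma(0)v_i \neq 0 \iff N_\xi(\mu_i)v_i \neq 0$.

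The second step is to make the connection to $F_\xi(s)$ concrete. I would substitute $\xi(t) = \text{e}^{\mu_i t}v_i$ into the PDE subsystem \eqref{xeq}--\eqref{aktbc} with $u\equiv 0$ and compute the steady-state (frequency-domain) response: the output $y$ then equals $F_\xi(\mu_i)v_i\,\text{e}^{\mu_i t}$, while the corresponding PDE state has a spatial profile governed by an ODE in $z$ driven by $C_1(z)v_i$ with boundary coupling through $C_2 v_i$. On the other hand, the decoupling transformation \eqref{newcoordODE} together with \eqref{obstrafo} shows that $\Gamma(z)$ is precisely the map from $\varepsilon_\xi$ to the $\xi$-induced part of the (backstepping-transformed) PDE state; tracing the definitions of $G_o(z)$, $\mathcal{T}_o$ and $R(z,\zeta)$ back through \eqref{odepdebvp}, the function $\Gamma(z)v_i$ is the solution of the same $z$-ODE that describes the plant's $\xi$-to-state behaviour at $s = \mu_i$. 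Evaluating at $z = 0$ and using the measurement $y = x_1(0,\cdot) = E_1^T x(0,\cdot)$ identifies $E_1^T\Gamma(0)v_i$ with $F_\xi(\mu_i)v_i$ up to the invertible denominator factor: since $D_\xi(s)$ is the (scalar or matrix) pole polynomial of the infinite-dimensional part and $\mu_i$ is not a pole of $F_\xi$ generically — or more carefully, since $D_\xi(\mu_i)\neq 0$ can be argued from well-posedness of \eqref{odepdebvp} at $s=\mu_i$, which is exactly what the unique solvability in Theorem \ref{lem:odepdecasc} guarantees — we get $E_1^T\Gamma(0)v_i = 0 \iff N_\xi(\mu_i)v_i = 0$.

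The third step is bookkeeping: assemble the PBH equivalence from step one with the characterization from step two to conclude \eqref{obsbed}. One must also handle the case of a repeated eigenvalue $\mu_i$ with several independent eigenvectors $v_i$; since PBH only requires testing against a basis of $\ker(F - \mu_i I)$, and since the argument in step two is linear in $v_i$, no extra work is needed beyond indexing the $v_i$ as in the statement.

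The main obstacle I anticipate is step two: carefully justifying that $\Gamma(z)v_i$ coincides with the frequency-domain PDE-state profile at $s=\mu_i$, because $\Gamma$ is defined through the \emph{backstepping} coordinates, not the original ones, so one has to push the identification through $\mathcal{T}_o$ and the reciprocity relation \eqref{recrelcontrolobs}, and one has to verify that the boundary condition $(E_1^T - Q_1 E_2^T)\Gamma(1) = -\int_0^1 S(\zeta)\Gamma(\zeta)\,\d\zeta$ is the transformed version of the actuated boundary condition \eqref{aktbc} with $u = 0$. The cleanest route is probably to undo the backstepping transformation first — define $\bar\Gamma(z) := \mathcal{T}_o^{-1}[\Gamma](z)$, show $\bar\Gamma$ solves the $z$-ODE $\Lambda(z)\bar\Gamma'(z) = A(z)\bar\Gamma(z) + \bar\Gamma(z)F - C_1(z)$ with the plant's original boundary conditions at $s = \mu_i$ (this is a transport-type ODE whose fundamental solution defines $F_\xi$), and then read off $E_1^T\Gamma(0) = E_1^T\bar\Gamma(0)$ because $\mathcal{T}_o^{-1}$ is the identity at $z=0$. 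A subtlety worth a sentence is that $D_\xi(\mu_i)\neq 0$: if $\mu_i$ were a transmission pole of the PDE part the numerator/denominator split would be ambiguous, but the unique solvability of \eqref{odepdebvp} (equivalently $\det M \neq 0$ in \eqref{Mdef}) rules this out, so the equivalence is well-posed.
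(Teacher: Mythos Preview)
Your overall strategy coincides with the paper's: both reduce observability to the PBH condition $E_1^T\Gamma(0)v_i \neq 0$ and then tie this quantity to the numerator of the transfer matrix at $s=\mu_i$. The paper's implementation of step two differs slightly from yours. Rather than pulling $\Gamma$ back to the original coordinates, it stays in the backstepping frame: it postmultiplies the decoupling BVP \eqref{odepdebvp} by each eigenvector $v_i$, solves the resulting decoupled first-order ODE for $\gamma_i=\Gamma v_i$ explicitly via the diagonal fundamental matrix $\Phi(z,\zeta,\mu_i)=\operatorname{diag}(\text{e}^{\mu_i(\phi_k(z)-\phi_k(\zeta))})$, inserts the result into the boundary condition at $z=1$, and obtains a relation of the form $M_1(\mu_i)\,E_1^T\gamma_i(0)=M_2(\mu_i)$ with $M_1(\mu_i)$ upper triangular and hence invertible for every $\mu_i$. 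The identification $M_2(\mu_i)=N_\xi(\mu_i)v_i$ is then made by transforming the \emph{plant} forward into backstepping coordinates (via $x=\mathcal{T}_o^{-1}[\tilde x]$) and reading off the transfer matrix there.

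One caution about your proposed ``pull $\Gamma$ back'' route: the claim that $\bar\Gamma=\mathcal{T}_o^{-1}[\Gamma]$ satisfies the plant's original boundary conditions is not correct at $z=0$. The decoupling equations \eqref{odepdebvp} impose $E_2^T\Gamma(0)=C_2$ \emph{without} a $Q_0E_1^T\Gamma(0)$ term, because the observer-error boundary condition \eqref{uabcoe} has already lost the $Q_0$-feedback through the output injection $\hat{x}_2(0)=Q_0y+C_2\hat\xi$; the plant's own condition \eqref{uabc} still carries $Q_0x_1(0)$. Since $\mathcal{T}_o^{-1}$ is the identity at $z=0$, the same mismatch persists for $\bar\Gamma$, so $\bar\Gamma(z)v_i$ is \emph{not} the plant's frequency-domain state profile and $E_1^T\Gamma(0)v_i$ is not $F_\xi(\mu_i)v_i$ in general. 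The equivalence with $N_\xi(\mu_i)v_i\neq 0$ still goes through because the intervening left factor ($M_1^{-1}(\mu_i)$ in the paper's notation) is always invertible, but your plan as stated would need an extra step to isolate $N_\xi$ from the $Q_0$-feedback; the paper's ``plant-forward'' computation delivers this separation directly.
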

\begin{proof}
The pair $(E_1^{T}\Gamma(0),F)$ is observable iff 
\begin{equation}\label{obsbedmod}
 E_1^T\Gamma(0)v_{i} \neq 0, \quad i = 1,2,\ldots,\bar{n}_{\xi}
\end{equation}
(see \cite[Th. 6.2.5]{KL80} and the proof of Theorem \ref{lem:odepdecasc}). 
Postmultiply \eqref{odepdebvp} by the linearly independent eigenvectors $v_i$, $i = 1,2,\ldots,\bar{n}_{\xi}$, of $F$ w.r.t. the eigenvalues $\mu_{i}$ and define $\gamma_i = \Gamma v_{i}$, $g_{i} = -\Lambda^{-1}G_ov_{i}$ and $c_i = C_2v_i$. For $i = 1,2,\ldots,\bar{n}_{\xi}$ this results in the decoupled BVPs 
\begin{subequations}
	\begin{align}
		\d_z\gamma_i(z) &= \mu_{i}\Lambda^{-1}(z)\gamma_i(z) + g_i(z), \quad z \in (0,1) \label{obsode}\\
		E_2^T\gamma_{i}(0) &= c_i\label{obsodebc1}\\
		(E_1^T - Q_1E_2^T)\gamma_{i}(1) &= -\textstyle\int_0^1S(\zeta)\gamma_{i}(\zeta)\d\zeta. \label{obsodebc2}	 
	\end{align}
\end{subequations}
The \emph{fundamental matrix} solving the IVP 
\begin{equation}
	\partial_z\Phi(z,\zeta,\mu_i) = \mu_i\Lambda^{-1}(z)\Phi(z,\zeta,\mu_i), \quad \Phi(\zeta,\zeta,\mu_i) = I, \quad i = 1,2,\ldots,\bar{n}_{\xi}
\end{equation}
reads
\begin{equation}\label{fmat}
	\Phi(z,\zeta,\mu_i) = \operatorname{diag}(\text{e}^{\mu_i(\phi_1(z) - \phi_1(\zeta))},\ldots,\text{e}^{\mu_i(\phi_n(z) - \phi_n(\zeta))}),
\end{equation}
in which $\phi_i$, $i = 1,2,\ldots,\bar{n}_{\xi}$, is defined in \eqref{phidef}. Consider 
\begin{equation}
	\gamma_i(0) = E_1E_1^T\gamma_i(0) + E_2E_2^T\gamma_i(0) = E_1E_1^T\gamma_i(0) + E_2c_i
\end{equation}	
in view of \eqref{obsodebc1}. Then, the solution of \eqref{obsode} and \eqref{obsodebc1} is
\begin{equation}
	\gamma_i(z) = \Phi(z,0,\mu_{i})E_1E_1^T\gamma_i(0) + m(z,\mu_{i})
\end{equation}
with
\begin{equation}
	m(z,\mu_{i}) = \Phi(z,0,\mu_{i})E_2c_i + \textstyle\int_0^z\Phi(z,\zeta,\mu_{i})g_{i}(\zeta)\d\zeta.
\end{equation}
By inserting this in \eqref{obsodebc2} and defining 
\begin{equation}
	M_1(\mu_{i}) = ((E_1^T - Q_1E_2^T)\Phi(1,0,\mu_{i})
	+ \textstyle\int_0^1S(\zeta)\Phi(\zeta,0,\mu_{i})\d\zeta)E_1
\end{equation}
one gets
\begin{equation}\label{mproof}
	M_1(\mu_{i})E_1^T\gamma_i(0) = M_2(\mu_i)
\end{equation} 
with 
\begin{equation}
	M_2(\mu_i) = -(E_1^T - Q_1E_2^T)m(1,\mu_i) - \textstyle\int_0^1S(\zeta)m(\zeta,\mu_i)\d\zeta
\end{equation}
after a simple calculation. Since $\Phi(z,\zeta,\mu_{i})$ is diagonal and $S(\zeta)\Phi(\zeta,0,\mu_{i})E_1$ is a strictly upper triangular (see \eqref{h01defs}) the matrix $M_1(\mu_i)$ in \eqref{mproof} is upper triangular. Hence,  $\det M_1(\mu_{i}) = \prod_{j = 1}^p\exp(\mu_{i}\phi_j(1)) \neq 0$, $i = 1,2,\ldots,n_{\xi}$, is directly obtained. With this, \eqref{mproof} can be uniquely solved for $E_1^T\gamma_i(0)$ yielding for \eqref{obsbedmod} the result
\begin{equation}
 E_1^T\Gamma(0)v_{i} = E_1^T\gamma_{i}(0) = M_1^{-1}(\mu_{i})M_2(\mu_i).
\end{equation}
Hence, \eqref{obsbedmod} holds iff $M_2(\mu_i) \neq 0$. Apply the backstepping transformation $x(z,t) = \mathcal{T}^{-1}_o[\tilde{x}(t)](z)$ (see \eqref{obstrafo}) to \eqref{xeq}--\eqref{aktbc} and \eqref{meas}. By making use of \eqref{kerntrafo} this leads to the system description
\begin{subequations}
 \begin{align}
	\partial_t\tilde{x}(z,t) &= \Lambda(z)\partial_z\tilde{x}(z,t) + A_0(z)\tilde{x}_1(0,t) - (\mathcal{T}_o[C_1](z) + R(z,0)\Lambda(0)E_2C_2)\xi(t)\label{tfode2}\\
	\tilde{x}_2(0,t) &= Q_0\tilde{x}_1(0,t) + C_2\xi(t)\\
	\tilde{x}_1(1,t) &= Q_1\tilde{x}_2(1,t) - \textstyle\int_0^1S(\zeta)\tilde{x}(\zeta,t)\d\zeta\\
	y(t) &= \tilde{x}_1(0,t).
 \end{align}
\end{subequations}
From this, the transfer matrix $F_{\xi}(s)= D_{\xi}^{-1}(s)N_{\xi}(s)$ can be calculated in closed-form. As a result one obtains the relation
\begin{equation}
 M_2(\mu_i) = N_{\xi}(\mu_i)v_i,
\end{equation}
which proves the theorem. 
\end{proof}
 \begin{rem}
The result of Theorem \ref{lem:obs} has an intuitive meaning. The condition \eqref{obsbed} ensures that the eigenmodes of the ODE subsystem \eqref{plantode} can be transferred to $y(t)$ in the steady state. Then, the observer \eqref{obs} is capable to estimate the lumped state $\xi(t)$ from the measurement $y(t)$. As a consequence, \eqref{obsbed} represents the \emph{existence condition} for the PDE-ODE system \eqref{plant} to be stabilizable by observer-based state feedback control. \hfill $\triangleleft$
 \end{rem}

After determining $L_{\xi}$ by an eigenvalue assignment for $F - L_{\xi}E_1^{T}\Gamma(0)$ (see \eqref{odetildobsfin}), the observer gain 
\begin{equation}\label{obsgainf}
L(z) = \mathcal{T}^{-1}_o[\Gamma](z)L_{\xi} - R_I(z,0)\Lambda(0) E_1
\end{equation}
can be calculated (see \eqref{obsgain} and \eqref{Ltilcomp}), which completes the design of the observer \eqref{obs}.

\section{Closed-loop stability}\label{sec:clstab}
By utilizing the state estimates $\hat{\xi}$ and $\hat{x}$ of the observer \eqref{obs} in \eqref{sfeed} one obtains an output feedback controller in form of the \emph{observer-based compensator}
\begin{subequations}\label{obsc}
	\begin{align}
	\partial_t\hat{x}(z,t) &= \Lambda(z)\partial_z\hat{x}(z,t) + A(z)\hat{x}(z,t) + C_1(z)\hat{\xi}(t) + L(z)(y(t) - \hat{x}_1(0,t))\label{xeqoc}\\
	\hat{x}_2(0,t) &= Q_0y(t) + C_2\hat{\xi}(t)\label{uabcoc}\\
	\hat{x}_1(1,t) &= Q_1\hat{x}_2(1,t) + u(t)\label{aktbcoc}\\
	\dot{\hat{\xi}}(t) &= F\hat{\xi}(t) + By(t) + L_{\xi}(y(t) - \hat{x}_1(0,t))\label{plantodeoc}\\
	u(t) &=  - Q_1\hat{x}_2(1,t) + \mathcal{K}[\hat{\xi}(t),\hat{x}(t)]\label{outc}.		
	\end{align}
\end{subequations}
In what follows the stability of the resulting closed-loop system is investigated. For this, apply \eqref{outc} to \eqref{plant} as well as use $\hat{x} = x - \varepsilon_x$ and $\hat{\xi} = x - \varepsilon_{\xi}$ (see Section \ref{sec:obs}). This results in 
\begin{subequations}\label{cplant}
 \begin{align}
	\partial_tx(z,t) &= \Lambda(z)\partial_zx(z,t) \!+\! A(z)x(z,t) \!+\! C_1(z)\xi(t), && (z,t) \in (0,1) \times \mathbb{R}^+ \label{cxeq}\\
	x_2(0,t) &= Q_0x_1(0,t) +  C_2\xi(t), &&  t > 0\label{cuabc}\\
	x_1(1,t) &= \mathcal{K}[\xi(t),x(t)] + Q_1\varepsilon_{x_2}(1,t) - \mathcal{K}[\varepsilon_{\xi}(t),\varepsilon_{x}(t)], &&  t > 0\label{caktbc}\\
	\dot{\xi}(t) &= F\xi(t) + Bx_1(0,t), && t > 0.\label{cplantode}
 \end{align}
\end{subequations}
The backstepping transformation
\begin{align}\label{btrafocl}
\tilde{x}(z,t) = x(z,t) - \textstyle\int_0^zK(z,\zeta)x(\zeta,t)\d \zeta = \mathcal{T}_1[x(t)](z)
\end{align}
(see \eqref{btrafo}) and the decoupling coordinates
\begin{equation}\label{cccord2}
\tilde{x}(z,t) =  \mathcal{T}^{-1}_2[e_x(t)](z) + N_I(z)\xi(t)
\end{equation}
(see \eqref{ccord2}) map \eqref{cplant} into
\begin{subequations}\label{cplant2}
	\begin{align}
	\partial_te_x(z,t) &= \Lambda(z)\partial_ze_x(z,t) + H_0(z)e_{x_1}(0,t), && (z,t) \in (0,1) \times \mathbb{R}^+ \label{cxeq2}\\
	e_{x_2}(0,t) &= Q_0e_{x_1}(0,t), &&  t > 0\label{cuabc2}\\
	e_{x_1}(1,t) &= Q_1\varepsilon_{x_2}(1,t) - \mathcal{K}[\varepsilon_{\xi}(t),\varepsilon_{x}(t)], &&  t > 0\label{caktbc2}\\
	\dot{\xi}(t) &= (F-BK)\xi(t) + Be_{x_1}(0,t), && t > 0,\label{cplantode2}
	\end{align}
\end{subequations}
which can easily be inferred from the results of Section \ref{sec:statefeed} after straightforward calculations. Observe that
\begin{equation}\label{varepsxdef}
 \varepsilon_x(z,t) = \mathcal{T}_o^{-1}[\tilde{\varepsilon}_x(t)](z) = \mathcal{T}_o^{-1}[\vartheta(t)](z) + \mathcal{T}_o^{-1}[\Gamma](z)\varepsilon_{\xi}(t)
\end{equation}
holds in view of \eqref{obstrafo} and \eqref{newcoordODE}. With this, one obtains the cascade 
\begin{subequations}\label{ccplant}
	\begin{align}
	\partial_te_x(z,t) &= \Lambda(z)\partial_ze_x(z,t) + H_0(z)e_{x_1}(0,t) \label{ccxeq}\\
	e_{x_2}(0,t) &= Q_0e_{x_1}(0,t)\label{ccuabc}\\
	e_{x_1}(1,t) &= Q_1E_2^T\mathcal{T}_o^{-1}[\vartheta(t)](1) + \textstyle\int_0^1K_x(z)\mathcal{T}_o^{-1}[\vartheta(t)](z)\d z + \widetilde{Q}_1\varepsilon_{\xi}(t)\label{ccaktbc}\\
	\dot{\xi}(t) &= (F-BK)\xi(t) + Be_{x_1}(0,t)\label{ccplantode}\\
	\partial_t\vartheta(z,t) &= \Lambda(z)\partial_z\vartheta(z,t) \label{copdes13}\\	                       
	\vartheta_{2}(0,t) &= 0\label{cdobsue23}\\
	\vartheta_{1}(1,t) &= Q_1\vartheta_{2}(1,t) - \textstyle\int_0^1S(\zeta)\vartheta(\zeta,t)\d\zeta\label{cobsrb2e23}\\
	\dot{\varepsilon}_{\xi}(t) &= (F - L_{\xi}E_1^{T}\Gamma(0))\varepsilon_{\xi}(t) - L_{\xi}\vartheta_1(0,t)\label{codetildobsfin}
	\end{align}
\end{subequations}
with
\begin{equation}
 \widetilde{Q}_1 = Q_1E_2^T\mathcal{T}_o^{-1}[\Gamma](1) + K_{\xi} + \textstyle\int_0^1K_x(z)\mathcal{T}_o^{-1}[\Gamma](z)\d z
\end{equation}
of two asymptotically stable subsystems \eqref{ccxeq}--\eqref{ccplantode} and \eqref{copdes13}--\eqref{codetildobsfin} (see \eqref{pde-ode cascade} and \eqref{obserrsys3}). Hence, the corresponding solution remains bounded for $t \in [0,t_o)$ so that \eqref{ccplant} simplifies to
\begin{subequations}\label{cccplant}
	\begin{align}
	\partial_te_x(z,t) &= \Lambda(z)\partial_ze_x(z,t) + H_0(z)e_{x_1}(0,t) \label{ccxeq2}\\
	e_{x_2}(0,t) &= Q_0e_{x_1}(0,t)\label{ccuabc2}\\
	e_{x_1}(1,t) &= \widetilde{Q}_1\varepsilon_{\xi}(t)\label{ccaktbc2}\\
	\dot{\xi}(t) &= (F-BK)\xi(t) + Be_{x_1}(0,t)\label{ccplantode2}\\
	\dot{\varepsilon}_{\xi}(t) &= (F - L_{\xi}E_1^{T}\Gamma(0))\varepsilon_{\xi}(t)\label{codetildobsfin2}
	\end{align}
\end{subequations}
for $t \geq t_o$, because $\vartheta(z,t) \equiv 0$, $t \geq t_o$ (see Section \ref{sec:obs}). In order to determine the solution of \eqref{ccxeq2}--\eqref{ccaktbc2} the ODE subsystem \eqref{codetildobsfin2} is decoupled from the PDE subsystem \eqref{ccxeq2}--\eqref{ccaktbc2}. For this, introduce the \emph{decoupling coordinates}
\begin{equation}\label{fdecoupl}
 \tilde{e}(z,t) = e_x(z,t) -\Sigma(z)e_{\xi}(t),
\end{equation}
in which $\Sigma(z) \in \mathbb{R}^{n \times n_{\xi}}$ has to be determined. By a time differentiation of \eqref{fdecoupl} and inserting \eqref{cccplant} in the result it is not difficult to show that $\Sigma(z)$ has to be the solution of the \emph{decoupling equations}
\begin{subequations}\label{odepdebvpc}
	\begin{align}
	\Lambda(z) \d_z\Sigma(z) &= \Sigma(z)(F - L_{\xi}E_1^{T}\Gamma(0)) - H_o(z)E_1^T\Sigma(0),\quad z \in (0,1)\label{pdedecouplst}\\
	(E_2^T-Q_0E_1^T)\Sigma(0) &= 0\\
	E_1^T\Sigma(1) &= \widetilde{Q}_1
	\end{align}
\end{subequations}
so that \eqref {fdecoupl} maps \eqref{cccplant} into
\begin{subequations}\label{ccccplant}
	\begin{align}
	\partial_t\tilde{e}(z,t) &= \Lambda(z)\partial_z\tilde{e}(z,t) + H_0(z)\tilde{e}_{1}(0,t) \label{cccxeq}\\
	\tilde{e}_{2}(0,t) &= Q_0\tilde{e}_{1}(0,t)\label{cccuabc}\\
	\tilde{e}_{1}(1,t) &= 0\label{ccaktbc3}\\
	\dot{\xi}(t) &= (F-BK)\xi(t) + BE_1^T\Sigma(0)e_{\xi}(t) + B\tilde{e}_{1}(0,t)\label{cccplantode}\\
	\dot{\varepsilon}_{\xi}(t) &= (F - L_{\xi}E_1^{T}\Gamma(0))\varepsilon_{\xi}(t).\label{ccodetildobsfin}
	\end{align}
\end{subequations}
The next theorem asserts the solvability of \eqref{odepdebvpc}.
\begin{theorem}[Decouplability of the PDE subsystem]\label{th:pdesub}
The BVP \eqref{odepdebvpc} has a unique solution $\Sigma(z) = [\Sigma_{ij}(z)] \in \mathbb{R}^{n \times n_{\xi}}$ with $\Sigma_{ij}$ piecewise $C^1$-functions, $i = 1,2,\ldots,n$, $j = 1,2,\ldots,n_{\xi}$.
\end{theorem}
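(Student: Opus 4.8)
The plan is to rerun the proof of Theorem~\ref{lem:odepdecasc} almost verbatim, using in addition the decomposition technique applied to the $N_I$-equations in Section~\ref{sec:N_isolve}. With $E_1E_1^T + E_2E_2^T = I_n$ and the intertwining relations $E_i^T\Lambda^{-1}(z) = \Lambda_i^{-1}(z)E_i^T$, $i = 1,2$, I would split \eqref{odepdebvpc} into equations for $\Sigma_1(z) = E_1^T\Sigma(z) \in \mathbb{R}^{p \times n_{\xi}}$ and $\Sigma_2(z) = E_2^T\Sigma(z) \in \mathbb{R}^{m \times n_{\xi}}$. Since $E_1^TH_0(z) = H_1(z)$ and $E_2^TH_0(z) = H_2(z)$ (see \eqref{H0def}), $E_1^T\Sigma(0) = \Sigma_1(0)$, and writing $\bar{F} = F - L_{\xi}E_1^{T}\Gamma(0)$ for brevity, the $z = 0$ boundary condition of \eqref{odepdebvpc} gives $\Sigma_2(0) = Q_0\Sigma_1(0)$ and one obtains the initial value problem
\begin{subequations}\label{plan:Sig2}
	\begin{align}
	\Sigma_2'(z) &= \Lambda_2^{-1}(z)(\Sigma_2(z)\bar{F} - H_2(z)\Sigma_1(0)), \quad z \in (0,1)\\
	\Sigma_2(0) &= Q_0\Sigma_1(0),
	\end{align}
\end{subequations}
driven by the still unknown constant matrix $\Sigma_1(0)$, together with
\begin{subequations}\label{plan:Sig1}
	\begin{align}
	\Sigma_1'(z) &= \Lambda_1^{-1}(z)(\Sigma_1(z)\bar{F} - H_1(z)\Sigma_1(0)), \quad z \in (0,1)\\
	\Sigma_1(1) &= \widetilde{Q}_1,
	\end{align}
\end{subequations}
in which $\Sigma_1(0)$ appears both as the sought initial value and as a parameter of the forcing term. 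Contrary to \eqref{odepdebvp}, the terminal condition here neither couples $\Sigma_1(1)$ with $\Sigma_2(1)$ nor contains an integral, which further simplifies the analysis.

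The decisive structural ingredient is that $H_1(z)$ is \emph{strictly lower triangular} (see \eqref{h01def}), exactly as $A_1(z)$ in Section~\ref{sec:N_isolve}. Writing \eqref{plan:Sig1} row by row with $\sigma_i^T(z) = e_i^T\Sigma_1(z)$, $e_i$ the $i$-th unit vector in $\mathbb{R}^p$, yields
\begin{equation}
	\d_z\sigma_i^T(z) = \textstyle\frac{1}{\lambda_i(z)}\big(\sigma_i^T(z)\bar{F} - \sum_{k=1}^{i-1}h_{ik}(z)\sigma_k^T(0)\big), \quad z \in (0,1),
\end{equation}
with $\sigma_i^T(1) = e_i^T\widetilde{Q}_1$, so that the $i$-th row is forced only by the lower-indexed initial values $\sigma_1^T(0),\ldots,\sigma_{i-1}^T(0)$. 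Introducing the fundamental matrices $\Psi_i(z,\zeta) = \exp(\bar{F}(\phi_i(z) - \phi_i(\zeta)))$ in the spirit of \eqref{fmatdef} and applying variation of constants, one gets
\begin{equation}\label{plan:rowsol}
	\sigma_i^T(z) = \sigma_i^T(0)\Psi_i(z,0) - \textstyle\sum_{k=1}^{i-1}\sigma_k^T(0)\int_0^z\frac{h_{ik}(\zeta)}{\lambda_i(\zeta)}\Psi_i(z,\zeta)\d\zeta,
\end{equation}
which exhibits each row of $\Sigma_1(z)$ as an explicit affine function of $\sigma_1^T(0),\ldots,\sigma_i^T(0)$.

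Evaluating \eqref{plan:rowsol} at $z = 1$ and imposing $\sigma_i^T(1) = e_i^T\widetilde{Q}_1$ then fixes the initial data recursively: for $i = 1$ the forcing vanishes and $\sigma_1^T(0) = (e_1^T\widetilde{Q}_1)\Psi_1(1,0)^{-1}$, while for $i = 2,\ldots,p$ the right-hand side involves only the already computed $\sigma_1^T(0),\ldots,\sigma_{i-1}^T(0)$ and invertibility of $\Psi_i(1,0) = \exp(\bar{F}\phi_i(1))$ yields $\sigma_i^T(0)$ uniquely. Equivalently, the row-stacked unknowns satisfy a linear system with a block-triangular coefficient matrix whose diagonal blocks are the $\Psi_i(1,0)$, so its determinant equals $\prod_{i=1}^p\det\Psi_i(1,0) = \prod_{i=1}^p\exp(\operatorname{tr}(\bar{F}\phi_i(1))) \neq 0$, in complete analogy with \eqref{Mdef}--\eqref{nu0solv}. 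Hence $\Sigma_1(0)$, and therefore $\Sigma_1(z)$ on $[0,1]$ through \eqref{plan:rowsol}, is uniquely determined; then $\Sigma_2(0) = Q_0\Sigma_1(0)$ is known and the IVP \eqref{plan:Sig2} is uniquely solved row by row by variation of constants, so that $\Sigma(z) = E_1\Sigma_1(z) + E_2\Sigma_2(z)$ is the unique solution of \eqref{odepdebvpc}. Its entries are piecewise $C^1$ because $H_0(z)$ is piecewise $C^1$ (established in Section~\ref{sec:soldecoupl}), the $\Psi_i$ are $C^1$ since the $\phi_i$ inherit $C^1$-regularity from $\lambda_i \in C^1[0,1]$, and $\widetilde{Q}_1$ is a fixed, well-defined constant matrix built only from the feedback gains and from the piecewise $C^1$ matrix $\Gamma$ of Theorem~\ref{lem:odepdecasc}. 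I expect the only genuine subtlety — rather than a real obstacle — to be that the unknown boundary value $\Sigma_1(0)$ also enters the ODE through $H_0(z)\Sigma_1(0)$; it is precisely the strict lower-triangularity of $H_1(z)$ that turns the simultaneous determination of $\Sigma_1(0)$ and of $\Sigma_1(z)$ into the triangular recursion above, just as $A_1(z)$ does in Section~\ref{sec:N_isolve} and $S_1(\zeta)$ does in the proof of Theorem~\ref{lem:odepdecasc}.
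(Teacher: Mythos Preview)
Your proposal is correct and follows essentially the same approach as the paper's proof: the same splitting into $E_1^T\Sigma$ and $E_2^T\Sigma$, the same row-wise variation-of-constants representation with the fundamental matrices $\Psi_i(z,\zeta)=\exp\big((F-L_\xi E_1^T\Gamma(0))(\phi_i(z)-\phi_i(\zeta))\big)$, the same exploitation of the strict lower-triangularity of $H_1(z)$ to obtain a block-triangular linear system for the initial rows, and the same nonvanishing-determinant argument $\prod_{i=1}^p\exp(\operatorname{tr}(\bar F\phi_i(1)))\neq 0$. The only cosmetic difference is that you phrase the solvability as a forward recursion over $i$, whereas the paper assembles the resulting equations into a single block upper-triangular system \eqref{Mdef2} and inverts it in one step; these are equivalent viewpoints.
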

\begin{proof}
Define $\widetilde{F} = F - L_{\xi}E_1^{T}\Gamma(0)$ and use the same approach as in the proof of Theorem \ref{lem:odepdecasc} to obtain for \eqref{odepdebvpc} the result
\begin{subequations}\label{ivpN1e2}
	\begin{align}
	\d_zE_1^T\Sigma(z)  &= \Lambda^{-1}_1(z)(E_1^T\Sigma(z)\widetilde{F} - H_1(z)E_1^T\Sigma(0)), \quad z \in (0,1)\label{N1odee2}\\
	E_1^T\Sigma(1) &= \widetilde{Q}_1\label{rbcdecplcls}
	\end{align}
\end{subequations}
and 
\begin{subequations}\label{ivpN2e2}
	\begin{align}
	\d_zE_2^T\Sigma(z) &= \Lambda^{-1}_2(z)(E_2^T\Sigma(z)\widetilde{F} - H_2(z)E_1^T\Sigma(0)), \quad z \in (0,1)\label{N2odee2}\\
	E_2^T\Sigma(0) &= Q_0E_1^T\Sigma(0)\label{N2bce2}
	\end{align}
\end{subequations}
in view of the partition \eqref{H0def}. For solving \eqref{ivpN1e2} define $\nu_i^T(z) = e_i^TE_1^T\Sigma(z)$, $i = 1,2,\ldots,p$, $h_i^T(z) = -\frac{1}{\lambda_{i}(z)}e_i^TH_1(z)$ and $\tilde{q}_i^T = e_i^T\widetilde{Q}_1$. This gives for \eqref{ivpN1e2} the result
\begin{subequations}\label{ivpN2setme2}
	\begin{align}
	\d_z\nu_i^T(z) &= \textstyle\frac{1}{\lambda_{i}(z)}\nu_i^T(z)\widetilde{F}\label{N2odeme2}
	+ h_i^T(z)E_1^T\Sigma(0), \quad z \in (0,1)\\
	\nu_i^T(1) &= \tilde{q}_i^T.\label{N2bcme2}
	\end{align}
\end{subequations}
With the \emph{fundamental matrix} 
\begin{equation}\label{fmatdefest}
\Psi_i(z,\zeta) = \text{e}^{\widetilde{F}(\phi_i(z) - \phi_i(\zeta))}, \quad i = 1,2,\ldots,n,
\end{equation}
and $\phi_i$ defined in \eqref{phidef}, the solution of \eqref{ivpN2setme2} is
\begin{equation}\label{sol1bvpcls}
\nu_i^T(z) = \nu_i^T(0)\Psi_{i}(z,0)
+ \textstyle\int_0^zh_i^T(\zeta)E_1^T\Sigma(0)\Psi_{i}(z,\zeta)\d\zeta.
\end{equation}
Inserting this in \eqref{rbcdecplcls} and defining
\begin{equation}
 M_{ij} = -\textstyle\int_0^1\Psi_i(1,\zeta)\frac{1}{\lambda_i(\zeta)}h_{ij}(\zeta)\d\zeta
\end{equation}
yields
\begin{multline}\label{glsclspre}
 \begin{bmatrix}
 \nu_1^T(0)\Psi_1(1,0)\\
 \vdots\\
 \nu_p^T(0)\Psi_p(1,0)
 \end{bmatrix}\\
 -
 \int_0^1\begin{bmatrix}
 0^T\\
 \nu_1^T(0)\Psi_2(1,\zeta)\frac{1}{\lambda_2(\zeta)}h_{21}(\zeta)\\
 \vdots\\
 \nu_1^T(0)\Psi_p(1,\zeta)\frac{1}{\lambda_p(\zeta)}h_{p1}(\zeta) + \ldots + \nu_{p-1}^T(0)\Psi_p(1,\zeta)\frac{1}{\lambda_p(\zeta)}h_{p\,p-1}(\zeta)\\
 \end{bmatrix}\d\zeta = \widetilde{Q}_1
\end{multline}
after a simple calculation utilizing \eqref{h01def}. This can be rearranged in the form
\begin{equation}\label{nu0solv2}
\begin{bmatrix}
\nu_1^T(0) & \ldots & \nu_p^T(0)
\end{bmatrix}M = \begin{bmatrix}
\tilde{q}_1^T & \ldots & \tilde{q}_p^T
\end{bmatrix},
\end{equation}
in which
\begin{equation}\label{Mdef2}
M = \begin{bmatrix}
\Psi_1(1,0) & M_{21} & \ldots       & M_{p-1\,1}       & M_{p1}\\
 0          & \ddots & \ddots       & \ddots          & \vdots\\
 \vdots     & \ddots & \ddots       & \ddots          & \vdots\\
 \vdots     & \ddots & \ddots       & \Psi_{p-1}(1,0) & M_{p\,p-1}\\
 0	        & \ldots & \ldots       & 0               & \Psi_p(1,0)
\end{bmatrix}
\end{equation}
holds. From this,
\begin{equation}
\det M = \prod_{i=1}^p\det\Psi_i(1,0) = \prod_{i=1}^p\det \text{e}^{\widetilde{F}\phi_i(1)} 
= \prod_{i=1}^p \text{e}^{\operatorname{tr}(\widetilde{F}\phi_i(1))} \neq 0
\end{equation}	
follows. Hence, the initial values $\nu_i^T(0)$, $i = 1,2,\ldots,p$, can be uniquely determined so that the solution of \eqref{ivpN2setme2} is given by \eqref{sol1bvpcls}. Similarly, by defining $\mu_i^T(z) = e_i^TE_2^T\Sigma(z)$, $i = 1,2,\ldots,m$, the solution of \eqref{ivpN2e2} is given by
\begin{equation}
\mu_i^T(z) = e_i^TQ_0E_1^T\Sigma(0)\Psi_{i+p}(z,0)
- \textstyle\int_0^z\frac{1}{\lambda_{i+p}(\zeta)}e_i^TH_2(\zeta)E_1^T\Sigma(0)\Psi_{i+p}(z,\zeta)\d\zeta.
\end{equation}
Since the elements of $H_0(z)$ in \eqref{pdedecouplst} are piecewise $C^1$-functions (see Section \ref{sec:soldecoupl}) the same is true for the functions appearing in the right hand sides of \eqref{ivpN1e2} and \eqref{ivpN2e2}. As a consequence, \eqref{odepdebvpc} has a unique piecewise $C^1$-solution.
\end{proof}	
As \eqref{cccxeq}--\eqref{ccaktbc3} coincides with \eqref{casdpde}--\eqref{cascrb2} one obtains for $t \geq t_o + t_c$
\begin{equation}\label{fsys}
 	\begin{bmatrix}\dot{\xi}(t)\\
                	\dot{\varepsilon}_{\xi}(t)
    \end{bmatrix}
 	= \begin{bmatrix}
 	 F-BK & BE_1^T\Sigma(0)\\
     0    & F - L_{\xi}E_1^{T}\Gamma(0)
 	\end{bmatrix}
 	\begin{bmatrix}
 	 \xi(t)\\
 	 \varepsilon_{\xi}(t)
 	\end{bmatrix},
\end{equation}
because $\tilde{e}(z,t) \equiv 0$, $t \geq t_o + t_c$ (see Section \ref{sec:pdeodedecoupl}) implying
\begin{equation}\label{exsimple}
 e_x(z,t) = \Sigma(z)\varepsilon_{\xi}(t), \quad t \geq t_o + t_c
\end{equation}
in view of \eqref{fdecoupl}. Obviously, the system \eqref{fsys} is asymptotically stable, because $F-BK$ and $F - L_{\xi}E_1^{T}\Gamma(0)$ are Hurwitz matrices by assumption (see Sections \ref{sec:pdeodedecoupl} and \ref{sec:obs}). From this, the result
\begin{equation}\label{epsxas}
 \lim_{t \to \infty}\varepsilon_x(z,t) = \lim_{t \to \infty}\mathcal{T}_o^{-1}[\Gamma](z)\varepsilon_{\xi}(t) = 0
\end{equation}
can be inferred, when taking \eqref{varepsxdef} and $\vartheta(z,t) \equiv 0$, $t \geq t_o$ into account. Furthermore, 
\begin{equation}\label{xtilxas}
 \lim_{t \to \infty}\tilde{x}(z,t) =   \lim_{t \to \infty}(\mathcal{T}^{-1}_2[\Sigma](z)\varepsilon_{\xi}(t) + N_I(z)\xi(t)) = 0
\end{equation}
follows from \eqref{cccord2} and \eqref{exsimple}. This directly leads to
\begin{equation}\label{xas}
 \lim_{t \to \infty}x(z,t) = \lim_{t \to \infty}\mathcal{T}_1^{-1}[\tilde{x}(t)](z) = 0
\end{equation}
in light of \eqref{btrafocl}. Hence, the closed-loop system \eqref{plant} and \eqref{obsc} represented by \eqref{plant} and the observer error dynamics \eqref{obse} is asymptotically stable. This, of course, also implies the asymptotic stability of \eqref{plant} and \eqref{obsc}, which is the result of the next theorem.
\begin{theorem}[Closed-loop stability]\label{th:pdesub}
Assume that $F-BK$ and $F - L_{\xi}E_1^{T}\Gamma(0)$ are Hurwitz matrices. Then, the closed-loop system resulting from applying \eqref{obsc} to \eqref{plant} is asymptotically stable for all piecewise continuous IC $x(z,0) = x_0(z)$ and $\hat{x}(z,0) = \hat{x}_0(z)$ as well as for all $\xi(0), \hat{\xi}(0) \in \mathbb{R}^{n_{\xi}}$. 
\end{theorem}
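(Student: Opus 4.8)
The plan is to assemble the chain of transformations already constructed in Sections~\ref{sec:statefeed}--\ref{sec:clstab} and to track the state through the successive reductions of the closed-loop system. The starting point is the closed-loop system consisting of the plant \eqref{plant} driven by the compensator \eqref{obsc}, which by the substitution $\hat{x} = x - \varepsilon_x$, $\hat{\xi} = \xi - \varepsilon_{\xi}$ is equivalently described by \eqref{cplant} together with the observer error dynamics \eqref{obse}. First I would invoke the well-posedness of all the Volterra-type transformations: the backstepping transformation $\mathcal{T}_1$ in \eqref{btrafocl}, its decoupling companion $\mathcal{T}_2^{-1}$ in \eqref{cccord2}, the observer backstepping transformation $\mathcal{T}_o^{-1}$ in \eqref{obstrafo}, and the finite-dimensional shifts by $N_I$, $\Gamma$, $\Sigma$. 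Each of these is a bounded, boundedly invertible map on the relevant function space (this uses the reciprocity relations \eqref{recrel}, \eqref{recrelcontrolobs} and the piecewise-$C^1$ regularity of $K$, $R_I$, $N_I$, $\Gamma$, $P_I$ established in the earlier theorems), so asymptotic stability in the transformed coordinates is equivalent to asymptotic stability in the original coordinates, and likewise boundedness is preserved in both directions.

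Next I would run the reduction in stages. Applying $\mathcal{T}_1$ and $\mathcal{T}_2^{-1}$ to \eqref{cplant} yields the cascade \eqref{cplant2}, and substituting the observer-coordinate decomposition \eqref{varepsxdef} of $\varepsilon_x$ gives \eqref{ccplant}, whose $\vartheta$- and $\varepsilon_{\xi}$-blocks \eqref{copdes13}--\eqref{codetildobsfin} form the asymptotically stable observer error cascade of Theorem~\ref{lem:odepdecasc} (here one uses that $F-L_{\xi}E_1^T\Gamma(0)$ is Hurwitz) and whose $(e_x,\xi)$-block is driven by it. On the finite interval $[0,t_o)$ all signals stay bounded since each subsystem is asymptotically stable and hence bounded; for $t \geq t_o$ the finite-time stability \eqref{eq:to} forces $\vartheta(\cdot,t) \equiv 0$, so \eqref{ccplant} collapses to \eqref{cccplant}. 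Then one more decoupling transformation \eqref{fdecoupl}, whose defining BVP \eqref{odepdebvpc} is uniquely solvable with piecewise-$C^1$ kernel by Theorem~\ref{th:pdesub}, maps \eqref{cccplant} into \eqref{ccccplant}; the PDE part \eqref{cccxeq}--\eqref{ccaktbc3} coincides with the finite-time-stable cascade \eqref{casdpde}--\eqref{cascrb2}, so by \eqref{eq:tc} we get $\tilde{e}(\cdot,t)\equiv 0$ for $t \geq t_o+t_c$, leaving only the purely finite-dimensional triangular system \eqref{fsys}.

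It then remains to read off asymptotic decay: \eqref{fsys} is asymptotically stable because it is block upper-triangular with Hurwitz diagonal blocks $F-BK$ and $F-L_{\xi}E_1^T\Gamma(0)$, so $\xi(t),\varepsilon_{\xi}(t)\to 0$; \eqref{exsimple} then gives $e_x(\cdot,t)\to 0$, and unwinding the transformations via \eqref{varepsxdef}, \eqref{cccord2}, \eqref{btrafocl} yields \eqref{epsxas}, \eqref{xtilxas}, \eqref{xas}, i.e. $\varepsilon_x(\cdot,t)\to 0$ and $x(\cdot,t)\to 0$. Since $\hat{x} = x - \varepsilon_x$ and $\hat{\xi} = \xi - \varepsilon_{\xi}$ also tend to zero, the closed loop \eqref{plant}, \eqref{obsc} is asymptotically stable. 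The main obstacle, and the step requiring the most care, is the first one: making the equivalence between stability of \eqref{plant}--\eqref{obse} and stability of the final reduced system rigorous, which means specifying the state space (e.g. $(L_2(0,1))^n \times \mathbb{R}^{n_\xi}$ for plant and error), checking that each Volterra transformation and each finite-dimensional shift is a topological isomorphism on that space, and verifying that no regularity is lost when composing them; the subsequent finite-time-extinction arguments and the triangular-matrix stability conclusion are routine once this framework is in place.
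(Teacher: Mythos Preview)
Your proposal is correct and mirrors the paper's argument essentially step by step: the paper's proof is precisely the chain of reductions \eqref{cplant} $\to$ \eqref{cplant2} $\to$ \eqref{ccplant} $\to$ \eqref{cccplant} $\to$ \eqref{ccccplant} $\to$ \eqref{fsys} via the same transformations, the same finite-time extinction arguments at $t_o$ and $t_o+t_c$, and the same unwinding \eqref{epsxas}--\eqref{xas}. Your added caution about fixing a state space and verifying that each Volterra transformation is a topological isomorphism is a legitimate rigor point that the paper leaves implicit, but it does not change the structure of the argument.
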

\begin{rem}
It is worth noticing that the closed-loop dynamics for $t \geq t_o + t_c$ can be specified by the eigenvalue assignment for the matrices $F-BK$ and $F - L_{\xi}E_1^{T}\Gamma(0)$. This directly follows from \eqref{fsys} and \eqref{epsxas}--\eqref{xas}. Thereby, \eqref{fsys} indicates that the corresponding eigenvalues can be independently specified for the closed-loop system on the basis of the state feedback and observer design in the previous sections. Thus, in this sense, the \emph{separation principle} is valid for the design of the observer-based compensator \eqref{obsc}. \hfill $\triangleleft$
\end{rem}

\section{Example}
While a multitude of technological processes mentioned in the introduction could serve to illustrate the results presented here, for simplicity, only a numerical example is used to confirm the effectiveness of the observer-based compensator in \eqref{obsc}, i.e., the backstepping controller \eqref{sfeed} combined with the observer \eqref{obs}.

The unstable $4\times4$ heterodirectional hyperbolic system under consideration has the distributed state $x(z,t)\in\mathbb R^4$ with $x_1=[x^1,x^2]^T$ describing the values propagating in the negative $z$-direction and $x_2=[x^3,x^4]^T$ in the 
opposite direction. The system is given in the form \eqref{xeq} with matrices
\begin{align}
\Lambda = \begin{bmatrix} 3 & 0 & 0 & 0 \\[.5ex] 0 & 2 & 0 & 0 \\[.5ex] 0 & 0 & -1 & 0 \\[.5ex] 0 & 0 & 0 & -2 \end{bmatrix}, \qquad
A(z) = \begin{bmatrix} 0 & 2 \mathrm{e}^{2z} & \mathrm{e}^{3z} \sin z & -2 \mathrm{e}^{2z} \\[.5ex] -3  \mathrm{e}^{-2z} & 0 &  \mathrm{e}^{z} & 2 \\[.5ex] -2 \mathrm{e}^{-3z} &  \mathrm{e}^{-z} & 0 & z \mathrm{e}^{-z} \\[.5ex]  \mathrm{e}^{-2z} & -1 & -2 \mathrm{e}^{z} & 0 \end{bmatrix}
\end{align}
and $C_1=0$. The matrix $Q_1$ in \eqref{aktbc} is defined as
\begin{equation}
Q_1 = \begin{bmatrix} 2\mathrm{e}^{3} & \mathrm{e}^{2} \\[.5ex] \mathrm{e}^{1} & 2 \end{bmatrix}
\end{equation}
and the dynamic BCs
\begin{subequations}
	\label{eq:ex_dynbc}
	\begin{align}
	\ddot{x}^3(0,t) - 2x^3(0,t) &= x^1(0,t) - 3x^2(0,t) \\
	\dot{x}^4(0,t) &= -\dot{x}^3(0,t) + 2\dot{x}^1(0,t) + x^1(0,t)
	\end{align}
\end{subequations}
appear at $z=0$, which results in a coupled PDE-ODE system. As \eqref{eq:ex_dynbc} is a third-order ODE w.r.t.\ the input $x_1(0,t)$, introduce the lumped state
\begin{equation}
\label{eq:ex_state}
\xi(t) = \begin{bmatrix} x^3(0,t) \\[.5ex] \dot x^3(0,t) \\[.5ex] x^4(0,t)-2x^1(0,t) \end{bmatrix}.
\end{equation}
Consequently, the dynamic BCs \eqref{eq:ex_dynbc} can be written in the form \eqref{uabc} and \eqref{plantode}, where the matrices $F$ and $B$ in \eqref{plantode} follow from substituting \eqref{eq:ex_state} into \eqref{eq:ex_dynbc}, and the matrices $C_2$ and $Q_0$ from solving \eqref{eq:ex_state} for $x_2(0,t)$, giving
\begin{equation}
F = \begin{bmatrix} 0 & 1 & 0 \\[.5ex] 2 & 0 & 0 \\[.5ex] 0 & -1 & 0 \end{bmatrix}, \qquad
B = \begin{bmatrix} 0 & 0 \\[.5ex] 1 & -3 \\[.5ex] 1 & 0 \end{bmatrix}, \qquad
C_2 = \begin{bmatrix} 1 & 0 & 0 \\[.5ex] 0 & 0 & 1 \end{bmatrix}, \qquad
Q_0 = \begin{bmatrix} 0 & 0 \\[.5ex] 2 & 0 \end{bmatrix}.
\end{equation}

As the numerical example given is unstable, a state feedback controller \eqref{sfeed} is necessary to stabilize the system. For that, the design parameters in \eqref{cdbvp3aa}--\eqref{cdbvp2aa}, introduced for well-posedness of the kernel equations \eqref{ckbvp} (cf.\ Appendix \ref{appA}), are chosen such that \eqref{ckbvp} has a piecewise $C^1$-solution $K(z,\zeta)$. As the pair $(F,B)$ is controllable, the gain matrix $K$ is determined such that the eigenvalues of $F-BK$ in \eqref{cascode} are $-2$, $-3$ and $-4$. Based on that, the solution of the $p^2=4$ Volterra integral equations from Section \ref{sec:p2} completes the feedback design.
\begin{figure}[t]
	\centering
	\setlength\figureheight{6cm}
	\setlength\figurewidth{0.9\linewidth}
	\includegraphics[width=0.9\linewidth]{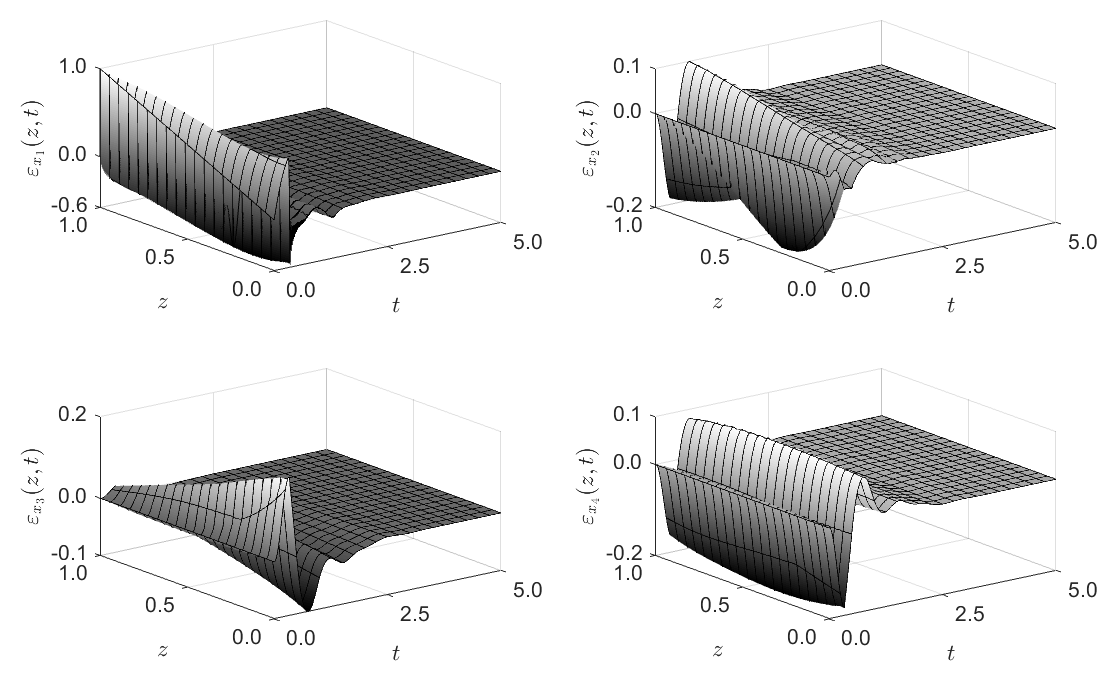}
	\caption{State profiles of the distributed observer errors $\varepsilon_{x_i}(z,t)= x_i(z,t) - \hat{x}_i(z,t)$, $i=1,\dots,4$.}
	\label{fig:surface_x_err}
\end{figure}
\begin{figure}[t]
	\centering
	\setlength\figureheight{4cm}
	\setlength\figurewidth{0.9\linewidth}
	\includegraphics[width=0.9\linewidth]{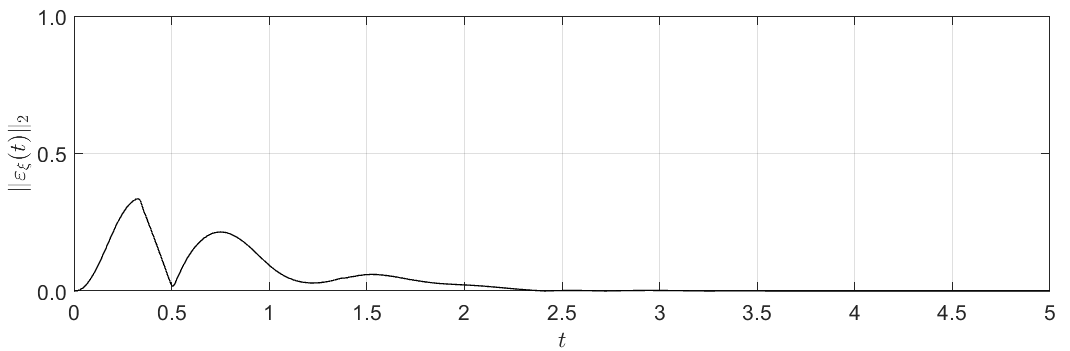}
	\caption{Euclidean norm $\norm{\varepsilon_\xi(t)}_2=\norm{\xi(t)-\hat{\xi}(t)}_2$ of the lumped observer errors.}
	\label{fig:norm_xi_err}
\end{figure}

\begin{figure}[t]
	\centering
	\setlength\figureheight{6cm}
	\setlength\figurewidth{0.9\linewidth}
	\includegraphics[width=0.9\linewidth]{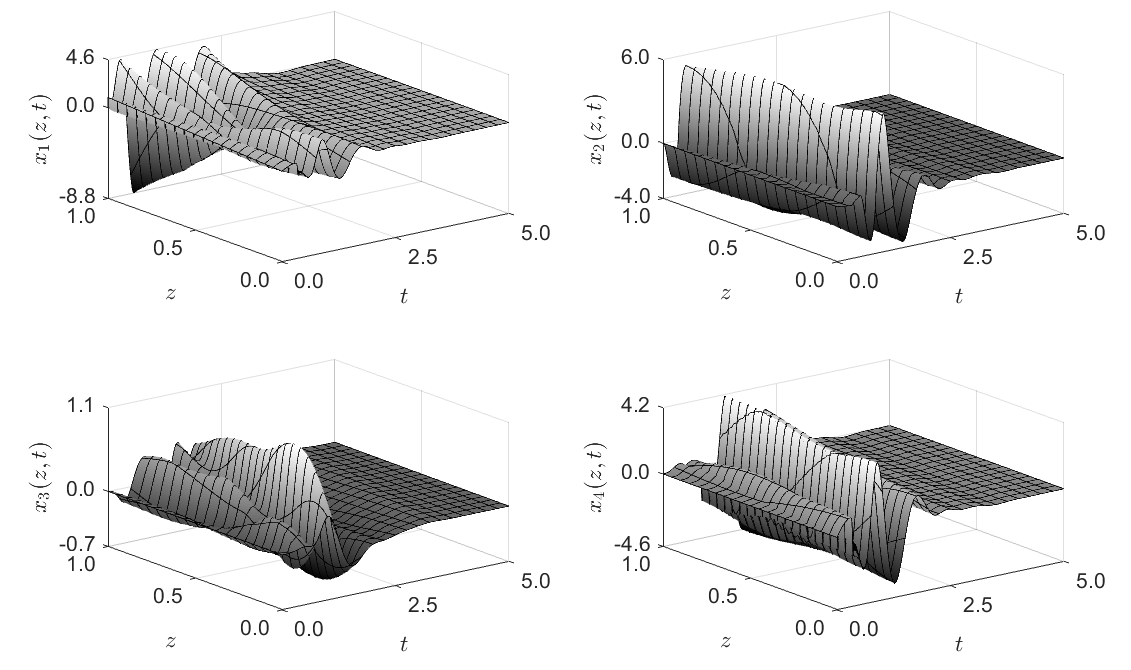}
	\caption{State profiles of the plant's distributed state components $x_i$, $i=1,\dots,4$ in the closed-loop system.}
	\label{fig:surface_x}
\end{figure}
\begin{figure}[t]
	\centering
	\setlength\figureheight{4cm}
	\setlength\figurewidth{0.9\linewidth}
	\includegraphics[width=0.9\linewidth]{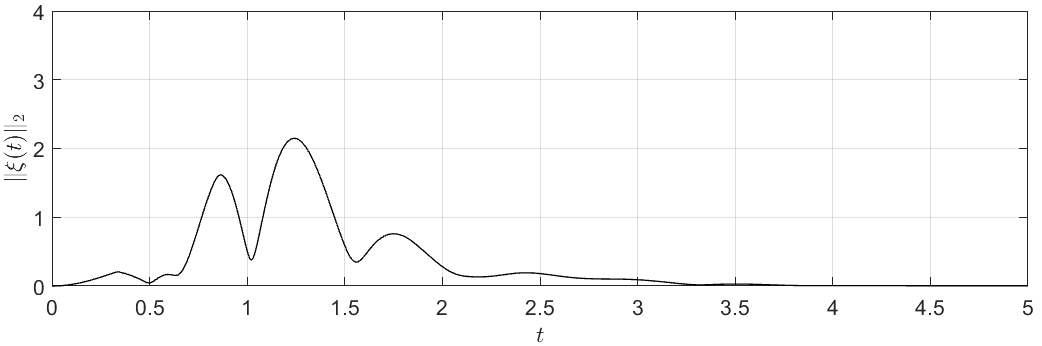}
	\caption{Euclidean norm $\norm{\xi(t)}_2$ of the plant's lumped state in the closed-loop system.}
	\label{fig:norm_xi}
\end{figure}

The implementation of the feedback controller \eqref{sfeed} requires an observer as only the output $y(t)=x_1(0,t)$ is measured. According to Theorem \ref{lem:obs}, the design of an observer \eqref{obs} requires  observability of the pair $(E_1^{T}\Gamma(0),F)$. Setting the design parameters involved in the solution of the kernel equations \eqref{obsbvp} such that $R_I(z,\zeta)$ is piecewise $C^1$, based on the numerical solution of \eqref{odepdebvp} (see Theorem \ref{lem:odepdecasc}) and
\begin{equation}
E_1^T\Gamma(0) = \begin{bmatrix} 49.6416 & -16.6636 & 31.9830 \\ -2.9637 & 3.5913 & 3.2244 \end{bmatrix}
\end{equation}
the observability condition is verified to be met. Consequently, the gain matrix $L_\xi$ in the state observer is chosen such that the eigenvalues of $F - L_{\xi}E_1^{T}\Gamma(0)$ in \eqref{odetildobsfin} are $-5$, $-6$ and $-7$. Then, the observer gain $L(z)$ follows from \eqref{obsgainf}.

The simulation results of the observer-based state feedback provided in Figures \ref{fig:surface_x_err}--\ref{fig:norm_xi} were obtained using an initialization of the unstable plant with $x(z,0) = [z,0,0,0]^T$ and $\xi(0) = 0$, while the ICs of the observer were set to $\hat{x}(z,0) = 0$ and $\hat{\xi}(0) = 0$. The profiles of the distributed observer errors in Fig.~\ref{fig:surface_x_err} and the Euclidean norm $\norm{\varepsilon_\xi(t)}_2$ of the lumped observer error in Fig.~\ref{fig:norm_xi_err} confirm that the observer provides correct estimates.  The lumped state error dynamics are autonomous for $t_o=\frac{1}{3}+\frac{1}{2}+1$ as defined in \eqref{eq:to} and, by that, $\varepsilon_\xi$ exponentially converges to zero for $t\ge\frac{11}{6}$. Following this minimal settling time, the observer errors are almost zero for $t>2.5$, which is confirmed by the simulation results in Figs.~\ref{fig:surface_x_err} and \ref{fig:norm_xi_err}. The minimal settling time for the plant state of the closed-loop system is $t_c+t_o=\frac{22}{6}$ (cf.\ \eqref{eq:tc} and \eqref{eq:to}), after which time the closed-loop dynamics is due only to the ODE \eqref{fsys}. Figs.~\ref{fig:surface_x} and \ref{fig:norm_xi} depict the distributed and the lumped state of the plant in the closed-loop system. It can be seen that the output feedback controller manages to asymptotically stabilize the unstable open-loop system, with the states almost being zero for $t>3.8$.

\section{Concluding remarks}
An interesting topic for further research is the extension of the observer design to  collocated measurements. This is a challenging problem, because the ODE in the observer is then subject to a coupling with the PDE from both boundaries. Similarly, this type of coupling can also arise in the PDE-ODE system itself, if, for example, dynamic boundary conditions are present at both boundaries. Then, the resulting PDE-ODE system is driven by an ODE, which hinders the controller design.

\bibliographystyle{apacite}
\bibliography{mybib}

\begin{thebibliography}{}

\bibitem [\protect \citeauthoryear {%
Anfinsen%
\ \BBA {} Aamo%
}{%
Anfinsen%
\ \BBA {} Aamo%
}{%
{\protect \APACyear {2017}}%
}]{%
An16}
\APACinsertmetastar {%
An16}%
\begin{APACrefauthors}%
Anfinsen, H.%
\BCBT {}\ \BBA {} Aamo, O.%
\end{APACrefauthors}%
\unskip\
\newblock
\APACrefYearMonthDay{2017}{}{}.
\newblock
{\BBOQ}\APACrefatitle {Disturbance rejection in general heterodirectional {1-D}
  linear hyperbolic systems using collocated sensing and control} {Disturbance
  rejection in general heterodirectional {1-D} linear hyperbolic systems using
  collocated sensing and control}.{\BBCQ}
\newblock
\APACjournalVolNumPages{Automatica}{76}{}{230--242}.
\PrintBackRefs{\CurrentBib}

\bibitem [\protect \citeauthoryear {%
Bastin%
\ \BBA {} Coron%
}{%
Bastin%
\ \BBA {} Coron%
}{%
{\protect \APACyear {2016}}%
}]{%
Bas16}
\APACinsertmetastar {%
Bas16}%
\begin{APACrefauthors}%
Bastin, G.%
\BCBT {}\ \BBA {} Coron, J\BHBI M.%
\end{APACrefauthors}%
\unskip\
\newblock
\APACrefYear{2016}.
\newblock
\APACrefbtitle {Stability and boundary stabilization of {1-D} hyperbolic
  systems} {Stability and boundary stabilization of {1-D} hyperbolic systems}.
\newblock
\APACaddressPublisher{}{Birkh\"auser}.
\PrintBackRefs{\CurrentBib}

\bibitem [\protect \citeauthoryear {%
Cor13}{%
Cor13}{%
{\protect \APACyear {2013}}%
}]{%
Cor13}
\APACinsertmetastar {%
Cor13}%
\begin{APACrefauthors}%
Coron, J\BHBI M.%
, Vazquez, R.%
, Krstic, M.%
\BCBL {}\ \BBA {} Bastin, G.%
\end{APACrefauthors}%
\unskip\
\newblock
\APACrefYearMonthDay{2013}{}{}.
\newblock
{\BBOQ}\APACrefatitle {Local exponential {$\text{H}^2$} stabilization of a $2
  \times 2$ quasilinear hyperbolic system using backstepping} {Local
  exponential {$\text{H}^2$} stabilization of a $2 \times 2$ quasilinear
  hyperbolic system using backstepping}.{\BBCQ}
\newblock
\APACjournalVolNumPages{SIAM J. Control Optim.}{51}{}{2005--2035}.
\PrintBackRefs{\CurrentBib}

\bibitem [\protect \citeauthoryear {%
Debnath%
}{%
Debnath%
}{%
{\protect \APACyear {2012}}%
}]{%
Deb12}
\APACinsertmetastar {%
Deb12}%
\begin{APACrefauthors}%
Debnath, L.%
\end{APACrefauthors}%
\unskip\
\newblock
\APACrefYear{2012}.
\newblock
\APACrefbtitle {Nonlinear {P}artial {D}ifferential {E}quations} {Nonlinear
  {P}artial {D}ifferential {E}quations}.
\newblock
\APACaddressPublisher{New York}{Birkh\"auser}.
\PrintBackRefs{\CurrentBib}

\bibitem [\protect \citeauthoryear {%
Deutscher%
}{%
Deutscher%
}{%
{\protect \APACyear {2016}}%
}]{%
Deu16a}
\APACinsertmetastar {%
Deu16a}%
\begin{APACrefauthors}%
Deutscher, J.%
\end{APACrefauthors}%
\unskip\
\newblock
\APACrefYearMonthDay{2016}{}{}.
\newblock
{\BBOQ}\APACrefatitle {Backstepping design of robust output feedback regulators
  for boundary controlled parabolic {PDEs}} {Backstepping design of robust
  output feedback regulators for boundary controlled parabolic {PDEs}}.{\BBCQ}
\newblock
\APACjournalVolNumPages{IEEE Trans. Autom. Control}{61}{}{2288--2294}.
\PrintBackRefs{\CurrentBib}

\bibitem [\protect \citeauthoryear {%
Deutscher%
}{%
Deutscher%
}{%
{\protect \APACyear {2017}}%
{\protect \APACexlab {{\protect \BCnt {1}}}}}]{%
Deu16b}
\APACinsertmetastar {%
Deu16b}%
\begin{APACrefauthors}%
Deutscher, J.%
\end{APACrefauthors}%
\unskip\
\newblock
\APACrefYearMonthDay{2017{\protect \BCnt {1}}}{}{}.
\newblock
{\BBOQ}\APACrefatitle {Backstepping design of robust state feedback regulators
  for linear $2 \times 2$ hyperbolic systems} {Backstepping design of robust
  state feedback regulators for linear $2 \times 2$ hyperbolic systems}.{\BBCQ}
\newblock
\APACjournalVolNumPages{IEEE Trans. Autom. Control}{62}{}{5240--5247}.
\PrintBackRefs{\CurrentBib}

\bibitem [\protect \citeauthoryear {%
Deutscher%
}{%
Deutscher%
}{%
{\protect \APACyear {2017}}%
{\protect \APACexlab {{\protect \BCnt {2}}}}}]{%
Deu16c}
\APACinsertmetastar {%
Deu16c}%
\begin{APACrefauthors}%
Deutscher, J.%
\end{APACrefauthors}%
\unskip\
\newblock
\APACrefYearMonthDay{2017{\protect \BCnt {2}}}{}{}.
\newblock
{\BBOQ}\APACrefatitle {Output regulation for general linear heterodirectional
  hyperbolic systems with spatially-varying coefficients} {Output regulation
  for general linear heterodirectional hyperbolic systems with
  spatially-varying coefficients}.{\BBCQ}
\newblock
\APACjournalVolNumPages{Automatica}{85}{}{34--42}.
\PrintBackRefs{\CurrentBib}

\bibitem [\protect \citeauthoryear {%
{Di Meglio}%
, Argomedo%
, Hu%
\BCBL {}\ \BBA {} Krstic%
}{%
{Di Meglio}%
\ \protect \BOthers {.}}{%
{\protect \APACyear {2016}}%
}]{%
Di16}
\APACinsertmetastar {%
Di16}%
\begin{APACrefauthors}%
{Di Meglio}, F.%
, Argomedo, F\BPBI B.%
, Hu, L.%
\BCBL {}\ \BBA {} Krstic, M.%
\end{APACrefauthors}%
\unskip\
\newblock
\APACrefYearMonthDay{2016}{}{}.
\newblock
{\BBOQ}\APACrefatitle {Stabilization of coupled linear heterodirectional
  hyperbolic {PDE-ODE} systems} {Stabilization of coupled linear
  heterodirectional hyperbolic {PDE-ODE} systems}.{\BBCQ}
\newblock
\APACjournalVolNumPages{Accepted for Automatica; available at
  \emph{\texttt{hal-mines-paristech.archives-ouvertes.fr}}}{}{}{}.
\PrintBackRefs{\CurrentBib}

\bibitem [\protect \citeauthoryear {%
Hu%
, {Di Meglio}%
, Vazquez%
\BCBL {}\ \BBA {} Krstic%
}{%
Hu%
\ \protect \BOthers {.}}{%
{\protect \APACyear {2016}}%
}]{%
Hu15a}
\APACinsertmetastar {%
Hu15a}%
\begin{APACrefauthors}%
Hu, L.%
, {Di Meglio}, F.%
, Vazquez, R.%
\BCBL {}\ \BBA {} Krstic, M.%
\end{APACrefauthors}%
\unskip\
\newblock
\APACrefYearMonthDay{2016}{}{}.
\newblock
{\BBOQ}\APACrefatitle {Control of homodirectional and general heterodirectional
  linear coupled hyperbolic {PDEs}} {Control of homodirectional and general
  heterodirectional linear coupled hyperbolic {PDEs}}.{\BBCQ}
\newblock
\APACjournalVolNumPages{IEEE Trans. Autom. Control}{61}{}{3301--3314}.
\PrintBackRefs{\CurrentBib}

\bibitem [\protect \citeauthoryear {%
Hu%
, Vazquez%
, {Di Meglio}%
\BCBL {}\ \BBA {} Krstic%
}{%
Hu%
\ \protect \BOthers {.}}{%
{\protect \APACyear {2015}}%
}]{%
Hu15b}
\APACinsertmetastar {%
Hu15b}%
\begin{APACrefauthors}%
Hu, L.%
, Vazquez, R.%
, {Di Meglio}, F.%
\BCBL {}\ \BBA {} Krstic, M.%
\end{APACrefauthors}%
\unskip\
\newblock
\APACrefYearMonthDay{2015}{}{}.
\newblock
{\BBOQ}\APACrefatitle {Boundary exponential stabilization of {1-D}
  inhomogeneous quasilinear hyperbolic systems} {Boundary exponential
  stabilization of {1-D} inhomogeneous quasilinear hyperbolic systems}.{\BBCQ}
\newblock
\APACjournalVolNumPages{Submitted for publication, available at
  \emph{\texttt{arXiv.org}}}{}{}{}.
\PrintBackRefs{\CurrentBib}

\bibitem [\protect \citeauthoryear {%
Kailath%
}{%
Kailath%
}{%
{\protect \APACyear {1980}}%
}]{%
KL80}
\APACinsertmetastar {%
KL80}%
\begin{APACrefauthors}%
Kailath, T.%
\end{APACrefauthors}%
\unskip\
\newblock
\APACrefYear{1980}.
\newblock
\APACrefbtitle {Linear {S}ystems} {Linear {S}ystems}.
\newblock
\APACaddressPublisher{Englewood Cliffs, NJ}{Prentice Hall}.
\PrintBackRefs{\CurrentBib}

\bibitem [\protect \citeauthoryear {%
Krstic%
}{%
Krstic%
}{%
{\protect \APACyear {2009}}%
{\protect \APACexlab {{\protect \BCnt {1}}}}}]{%
Kr09a}
\APACinsertmetastar {%
Kr09a}%
\begin{APACrefauthors}%
Krstic, M.%
\end{APACrefauthors}%
\unskip\
\newblock
\APACrefYearMonthDay{2009{\protect \BCnt {1}}}{}{}.
\newblock
{\BBOQ}\APACrefatitle {Compensating actuator and sensor dynamics governed by
  diffusion {PDEs}} {Compensating actuator and sensor dynamics governed by
  diffusion {PDEs}}.{\BBCQ}
\newblock
\APACjournalVolNumPages{Syst. Control Lett.}{58}{}{372--377}.
\PrintBackRefs{\CurrentBib}

\bibitem [\protect \citeauthoryear {%
Krstic%
}{%
Krstic%
}{%
{\protect \APACyear {2009}}%
{\protect \APACexlab {{\protect \BCnt {2}}}}}]{%
Kr09b}
\APACinsertmetastar {%
Kr09b}%
\begin{APACrefauthors}%
Krstic, M.%
\end{APACrefauthors}%
\unskip\
\newblock
\APACrefYearMonthDay{2009{\protect \BCnt {2}}}{}{}.
\newblock
{\BBOQ}\APACrefatitle {Compensating a string {PDE} in the actuation or sensing
  path of an unstable {ODE}} {Compensating a string {PDE} in the actuation or
  sensing path of an unstable {ODE}}.{\BBCQ}
\newblock
\APACjournalVolNumPages{IEEE Trans. Autom. Control}{54}{}{1362--1368}.
\PrintBackRefs{\CurrentBib}

\bibitem [\protect \citeauthoryear {%
Krstic%
\ \BBA {} Smyshlyaev%
}{%
Krstic%
\ \BBA {} Smyshlyaev%
}{%
{\protect \APACyear {2008}}%
{\protect \APACexlab {{\protect \BCnt {1}}}}}]{%
Kr08a}
\APACinsertmetastar {%
Kr08a}%
\begin{APACrefauthors}%
Krstic, M.%
\BCBT {}\ \BBA {} Smyshlyaev, A.%
\end{APACrefauthors}%
\unskip\
\newblock
\APACrefYearMonthDay{2008{\protect \BCnt {1}}}{}{}.
\newblock
{\BBOQ}\APACrefatitle {Backstepping boundary control for first-order hyperbolic
  {PDEs} and application to systems with actuator and sensor delays}
  {Backstepping boundary control for first-order hyperbolic {PDEs} and
  application to systems with actuator and sensor delays}.{\BBCQ}
\newblock
\APACjournalVolNumPages{Syst. Control Lett.}{57}{}{750--758}.
\PrintBackRefs{\CurrentBib}

\bibitem [\protect \citeauthoryear {%
Krstic%
\ \BBA {} Smyshlyaev%
}{%
Krstic%
\ \BBA {} Smyshlyaev%
}{%
{\protect \APACyear {2008}}%
{\protect \APACexlab {{\protect \BCnt {2}}}}}]{%
Kr08}
\APACinsertmetastar {%
Kr08}%
\begin{APACrefauthors}%
Krstic, M.%
\BCBT {}\ \BBA {} Smyshlyaev, A.%
\end{APACrefauthors}%
\unskip\
\newblock
\APACrefYear{2008{\protect \BCnt {2}}}.
\newblock
\APACrefbtitle {Boundary control of {PDEs} --- A course on backstepping
  designs} {Boundary control of {PDEs} --- a course on backstepping designs}.
\newblock
\APACaddressPublisher{Philadelphia}{SIAM}.
\PrintBackRefs{\CurrentBib}

\bibitem [\protect \citeauthoryear {%
Lamare%
\ \BBA {} Bekiaris-Liberis%
}{%
Lamare%
\ \BBA {} Bekiaris-Liberis%
}{%
{\protect \APACyear {2015}}%
}]{%
La15}
\APACinsertmetastar {%
La15}%
\begin{APACrefauthors}%
Lamare, P\BHBI O.%
\BCBT {}\ \BBA {} Bekiaris-Liberis, N.%
\end{APACrefauthors}%
\unskip\
\newblock
\APACrefYearMonthDay{2015}{}{}.
\newblock
{\BBOQ}\APACrefatitle {Control of $2 \times 2$ linear hyperbolic systems:
  {B}ackstepping-based trajectory generation and {PI}-based tracking} {Control
  of $2 \times 2$ linear hyperbolic systems: {B}ackstepping-based trajectory
  generation and {PI}-based tracking}.{\BBCQ}
\newblock
\APACjournalVolNumPages{Syst. Control Lett.}{86}{}{24--33}.
\PrintBackRefs{\CurrentBib}

\bibitem [\protect \citeauthoryear {%
Linz%
}{%
Linz%
}{%
{\protect \APACyear {1985}}%
}]{%
Lin85}
\APACinsertmetastar {%
Lin85}%
\begin{APACrefauthors}%
Linz, P.%
\end{APACrefauthors}%
\unskip\
\newblock
\APACrefYear{1985}.
\newblock
\APACrefbtitle {Analytical and {N}umerical {M}ethods for {V}olterra
  {E}quations} {Analytical and {N}umerical {M}ethods for {V}olterra
  {E}quations}.
\newblock
\APACaddressPublisher{Philadelphia}{SIAM}.
\PrintBackRefs{\CurrentBib}

\bibitem [\protect \citeauthoryear {%
Luo%
, Guo%
\BCBL {}\ \BBA {} Morgul%
}{%
Luo%
\ \protect \BOthers {.}}{%
{\protect \APACyear {1999}}%
}]{%
Luo99}
\APACinsertmetastar {%
Luo99}%
\begin{APACrefauthors}%
Luo, Z\BHBI H.%
, Guo, B\BHBI Z.%
\BCBL {}\ \BBA {} Morgul, O.%
\end{APACrefauthors}%
\unskip\
\newblock
\APACrefYear{1999}.
\newblock
\APACrefbtitle {Stability and Stabilization of infinite dimensional systems
  with applications} {Stability and stabilization of infinite dimensional
  systems with applications}.
\newblock
\APACaddressPublisher{London}{Springer-Verlag}.
\PrintBackRefs{\CurrentBib}

\bibitem [\protect \citeauthoryear {%
Sagert%
, {Di Meglio}%
, Krstic%
\BCBL {}\ \BBA {} Rouchon%
}{%
Sagert%
\ \protect \BOthers {.}}{%
{\protect \APACyear {2013}}%
}]{%
Sag13}
\APACinsertmetastar {%
Sag13}%
\begin{APACrefauthors}%
Sagert, C.%
, {Di Meglio}, F.%
, Krstic, M.%
\BCBL {}\ \BBA {} Rouchon, P.%
\end{APACrefauthors}%
\unskip\
\newblock
\APACrefYearMonthDay{2013}{}{}.
\newblock
{\BBOQ}\APACrefatitle {Backstepping and flatness approaches for stabilization
  of the stick-slip phenomenon for drilling} {Backstepping and flatness
  approaches for stabilization of the stick-slip phenomenon for
  drilling}.{\BBCQ}
\newblock
\APACjournalVolNumPages{Proc. IFAC Symp. System Structure and Control,
  Grenoble, France}{}{}{779--784}.
\PrintBackRefs{\CurrentBib}

\bibitem [\protect \citeauthoryear {%
Tang%
\ \BBA {} Xie%
}{%
Tang%
\ \BBA {} Xie%
}{%
{\protect \APACyear {2011}}%
{\protect \APACexlab {{\protect \BCnt {1}}}}}]{%
Ta11a}
\APACinsertmetastar {%
Ta11a}%
\begin{APACrefauthors}%
Tang, S.%
\BCBT {}\ \BBA {} Xie, C.%
\end{APACrefauthors}%
\unskip\
\newblock
\APACrefYearMonthDay{2011{\protect \BCnt {1}}}{}{}.
\newblock
{\BBOQ}\APACrefatitle {Stabilization for a coupled {PDE-ODE} control system}
  {Stabilization for a coupled {PDE-ODE} control system}.{\BBCQ}
\newblock
\APACjournalVolNumPages{J. Franklin Inst.}{348}{}{2142--2155}.
\PrintBackRefs{\CurrentBib}

\bibitem [\protect \citeauthoryear {%
Tang%
\ \BBA {} Xie%
}{%
Tang%
\ \BBA {} Xie%
}{%
{\protect \APACyear {2011}}%
{\protect \APACexlab {{\protect \BCnt {2}}}}}]{%
Ta11}
\APACinsertmetastar {%
Ta11}%
\begin{APACrefauthors}%
Tang, S.%
\BCBT {}\ \BBA {} Xie, C.%
\end{APACrefauthors}%
\unskip\
\newblock
\APACrefYearMonthDay{2011{\protect \BCnt {2}}}{}{}.
\newblock
{\BBOQ}\APACrefatitle {State and output feedback boundary control for a coupled
  {PDE-ODE} system} {State and output feedback boundary control for a coupled
  {PDE-ODE} system}.{\BBCQ}
\newblock
\APACjournalVolNumPages{Syst. Control Lett.}{60}{}{540--545}.
\PrintBackRefs{\CurrentBib}

\bibitem [\protect \citeauthoryear {%
Vazquez%
\ \BBA {} Krstic%
}{%
Vazquez%
\ \BBA {} Krstic%
}{%
{\protect \APACyear {2014}}%
}]{%
Vaz14}
\APACinsertmetastar {%
Vaz14}%
\begin{APACrefauthors}%
Vazquez, R.%
\BCBT {}\ \BBA {} Krstic, M.%
\end{APACrefauthors}%
\unskip\
\newblock
\APACrefYearMonthDay{2014}{}{}.
\newblock
{\BBOQ}\APACrefatitle {Marcum {Q}-functions and explicit kernels for
  stabilization of $2 \times 2$ linear hyperbolic systems with constant
  coefficients} {Marcum {Q}-functions and explicit kernels for stabilization of
  $2 \times 2$ linear hyperbolic systems with constant coefficients}.{\BBCQ}
\newblock
\APACjournalVolNumPages{Syst. Control Lett.}{68}{}{33--42}.
\PrintBackRefs{\CurrentBib}

\bibitem [\protect \citeauthoryear {%
Zhou%
\ \BBA {} Tang%
}{%
Zhou%
\ \BBA {} Tang%
}{%
{\protect \APACyear {2012}}%
}]{%
Ta12}
\APACinsertmetastar {%
Ta12}%
\begin{APACrefauthors}%
Zhou, Z.%
\BCBT {}\ \BBA {} Tang, S.%
\end{APACrefauthors}%
\unskip\
\newblock
\APACrefYearMonthDay{2012}{}{}.
\newblock
{\BBOQ}\APACrefatitle {Boundary stabilization of a coupled wave-{ODE} system
  with internal anti-damping} {Boundary stabilization of a coupled wave-{ODE}
  system with internal anti-damping}.{\BBCQ}
\newblock
\APACjournalVolNumPages{Int. J. Control}{85}{}{1683--1693}.
\PrintBackRefs{\CurrentBib}

\end{thebibliography}
%

\appendix
\section{Solution of the state feedback kernel equations}\label{appA}
In the following, the approach in \cite{Hu15b} for solving the kernel equations \eqref{ckbvp} is shortly reviewed in order to make the article selfcontained. Premultiplying \eqref{cdbvp3} by $E_1E_1^T + E_2E_2^T = I_{n}$ yields $(E_1E_1^T + E_2E_2^T)K(z,0)\Lambda(0)(E_1 + E_2Q_0)
= (E_1E_1^T + E_2E_2^T)A_0(z)$, from which
\begin{equation}\label{a:rbpre}
E_1^TK(z,0)\Lambda(0)(E_1 + E_2Q_0) = E_1^TA_0(z)
\end{equation}
and
\begin{equation}\label{a:rbpre2}
E_2^TK(z,0)\Lambda(0)(E_1 + E_2Q_0) = E_2^TA_0(z)
\end{equation}
can be deduced.  The lower triangular structure of $A_{1}(z) = E_1^TA_{0}(z)$ in \eqref{a01def} implies $[E_1^TA_0(z)]_{i \leq j} = 0$. With this, the result
\begin{equation}
[E_1^TK(z,0)\Lambda(0) (E_1 + E_2Q_0)]_{i \leq j} = 0
\end{equation}
follows from applying $[\,\cdot\,]_{i \leq j}$ to \eqref{a:rbpre}, which are BCs to be fulfilled by the kernel $K(z,\zeta)$. Accordingly, the elements $a_{ij}(z)$ in $A_{1}(z)$ are obtained from \eqref{a:rbpre} with
\begin{equation}\label{appcoeffcomp}
[E_1^TA_0(z)]_{i > j} \!=\! [E_1^TK(z,0)\Lambda(0) (E_1 + E_2Q_0)]_{i > j}.
\end{equation} 
The matrix 
\begin{equation}\label{appcoeffcomp2}
A_{2}(z) \!=\! E_2^TA_0(z) \!=\! E_2^TK(z,0)\Lambda(0) (E_1 + E_2Q_0)
\end{equation}
in \eqref{A0def} is directly implied by \eqref{a:rbpre2}. Consequently, the kernel equations \eqref{ckbvp} take the form 
\begin{subequations}\label{ckbvpa}
	\begin{align}
	\Lambda(z)\partial_zK(z,\zeta) + \partial_{\zeta}(K(z,\zeta)\Lambda(\zeta)) &= K(z,\zeta)A(\zeta)\label{cdbvp1a}\\
	[E_1^TK(z,0)\Lambda(0)(E_1 + E_2Q_0)]_{i \leq j} &=
	0\label{cdbvp3a}\\
	K(z,z)\Lambda(z) - \Lambda(z) K(z,z) &= A(z).\label{cdbvp2a}
	\end{align}
\end{subequations}
For $n \geq 3$, the considered class of hyperbolic systems \eqref{plant} contains at least two transport equations propagating in the same direction so that so-called \emph{homodirectional kernel-PDEs} appear in \eqref{cdbvp1a}. As a consequence, there exist kernel PDEs, which need more than one BC in order to uniquely determine their solution on the triangular spatial domain $0 \leq \zeta \leq z \leq 1$  (for an illustration see \cite{Hu15a}). Therefore, the \emph{artificial BCs} 
\begin{subequations}\label{ckbvp2}
	\begin{align}
	[E_1^TK(1,\zeta)E_1]_{i > j} &= l_{ij}(\zeta)\label{cdbvp3aa}\\
	[E_2^TK(z,0)E_2]_{i \geq j} &= m_{ij}(z)\label{cdbvp1aa}\\
	[E_2^TK(1,\zeta)E_2]_{i < j} &= n_{ij}(\zeta)\label{cdbvp2aa}
	\end{align}
\end{subequations}
are introduced so that the resulting BVP admits a unique piecewise $C^1$-solution.  
Thereby, the functions $l_{ij}, m_{ij}, n_{ij} \in C^{\infty}[0,1]$ can be chosen arbitrarily.
Hence, they constitute additional design parameters that may be utilized to shape the feedback gains $K_{\xi}$, $K_x(z)$ and thus the transients of the closed-loop dynamics. Further details for solving the kernel BVP \eqref{ckbvpa} and \eqref{ckbvp2} with the method of successive approximations can be found in \cite{Cor13,An16}.

\section{Solution of the observer kernel equations}\label{appB}
Introduce the transformation of the independent variables
\begin{subequations}
	\begin{align}
	\xi &= 1 - \zeta\\
	\eta  &= 1 - z
	\end{align}  
\end{subequations}
and define $\tilde{R}_I(\xi,\eta) = R_I(1-\eta,1-\xi)$, $\tilde{\Lambda}(\xi) = \Lambda(1 - \xi) = \Lambda(\zeta)$, $\tilde{\Lambda}(\eta) = \Lambda(1 - \eta) = \Lambda(z)$, $\tilde{A}(\eta) = A^T(1 - \eta)$ and $\tilde{F}(\xi) = -F^T(1-\xi)$. With this, an easy calculation shows that the kernel equations \eqref{obsbvp} result in 
\begin{subequations}\label{ckbvpd}
	\begin{align}
	\partial_{\xi}(\tilde{\Lambda}(\xi)\tilde{R}_I^T(\xi,\eta)) + \partial_{\eta}\tilde{R}^T_I(\xi,\eta)\tilde{\Lambda}(\eta) &= \tilde{R}^T_I(\xi,\eta)\tilde{A}(\eta)\label{cdbvp1d}\\
	\tilde{R}^T_I(\xi,0)(E_1 - E_2Q_1^T) &= \tilde{F}(\xi)\label{cdbvp3d}\\
	\tilde{R}^T_I(\xi,\xi)\tilde{\Lambda}(\xi) - \tilde{\Lambda}(\xi) \tilde{R}^T_I(\xi,\xi) &= \tilde{A}(\xi)\label{cdbvp2d}
	\end{align}
\end{subequations}
with \eqref{cdbvp1d} defined on $0 < \eta < \xi < 1$. It can easily be verified that the BVP \eqref{ckbvpd} has the same structure as the BVP appearing in the proof of Theorem A.1  in \cite{Hu15b}. Then, the regularity of the kernel $\tilde{R}_I^T(\xi,\eta)$ is deducible from this result. Consequently, the equations to be solved for determining $\tilde{R}_I^T(\xi,\eta)$ are derivable with the same reasoning as in Appendix \ref{appA}. From this, the kernel $R_I(z,\zeta) = \tilde{R}_I(1-\zeta,1-z)$ results.

\end{document}